\documentclass[12pt]{article}

\usepackage{color} 
\usepackage{epsfig}

\usepackage{amsmath,amssymb,amsfonts,amsthm}
\usepackage{moreverb,rotating,graphics}
 \usepackage[T1]{fontenc}
 \usepackage[normalem]{ulem}
 \usepackage{verbatim}
\usepackage[utf8]{inputenc}

\usepackage{latexsym}
\usepackage{pdflscape}
\usepackage{examplep}
\usepackage{longtable}
\usepackage{graphicx}
\usepackage{amsmath}
\usepackage{booktabs}
\usepackage{memhfixc}

\usepackage[francais,english]{babel} 

\newtheorem{theorem}{Theorem}[section]

\newtheorem{remark}{Remark}[section]
\newtheorem{lemma}{Lemma}[section]

\newtheorem{example}{Example}[section]

\newtheorem{prop}{Proposition}[section]

\def\cC{{\mathcal C}}

\def\N{{\mathbb N}}
\def\Z{{\mathbb Z}}
\def\R{{\mathbb R}}
\def\C{{\mathbb C}}

\def\cC{{\cal C}}
\def\cL{{\cal L}}

\def\cK{{\cal K}}
\def\cT{{\cal T}}

\def\cK{{\cal K}}

\def\cI{{\cal I}}

\def\cX{{\cal X}}
\def\cE{{\cal E}}

\def\cJ{{\cal J}}

\def\it{\widetilde{\iota}}
\def\Vt{\widetilde{V}}
\def\cAt{\widetilde{\mathcal{A}}}
\def\cBt{\widetilde{\mathcal{B}}}
\def\cUt{\widetilde{\mathcal{U}}}
\def\cLt{\widetilde{\mathcal{L}}}

\def\mt{\widetilde{m}}

\def\rh{\widehat{r}}
\def\sh{\widehat{s}}

\def\vt{\widetilde{v}}

\def\wt{\widetilde{w}}
\def\mt{\widetilde{m}}

\def\rt{\widetilde{r}}
\def\st{\widetilde{s}}
\def\Vt{\widetilde{V}}
\def\cD{\mathcal{D}}

\def\phih{\widehat{\phi}}
\def\cA{\mathcal{A}}
\def\cB{\mathcal{B}}

\newcommand \dps{\displaystyle }


\bibliographystyle{plain}

\title{Greedy algorithms for high-dimensional non-symmetric linear problems}
\author{Eric Canc\`es, Virginie Ehrlacher, Tony Leli\`evre}

\begin{document}

\selectlanguage{english}

\maketitle

\begin{abstract} 
In this article, we present a family of numerical approaches to solve high-dimensional linear non-symmetric problems. The principle of these 
methods is to approximate a function which depends on a large number of variates by a sum of tensor product functions, each term of which is iteratively computed via 
a greedy algorithm~\cite{Temlyakov}. There exists a good theoretical framework for these methods in the case of (linear and nonlinear) symmetric elliptic problems. However, the 
convergence results are not valid any more as soon as the problems considered are not symmetric. We present here a review of the main algorithms proposed in the literature to 
circumvent this difficulty, together with some new approaches. The theoretical convergence results and the practical implementation of these algorithms are discussed. 
Their behaviors are illustrated 
through some numerical examples.  
\end{abstract}

\section*{Introduction}

High-dimensional problems arise in a wide range of fields such as quantum chemistry, molecular dynamics, uncertainty quantification, polymeric fluids, finance...
 In all these contexts, 
one wishes to approximate a function $u$ depending on $d$ variates $x_1$, ..., $x_d$ where $d\in\N^*$ is typically very large. Classically, the function $u$ is defined as the 
solution of a Partial Differential Equation (PDE) and cannots be obtained by standard approximation techniques such as Galerkin methods for instance. Indeed, let us 
consider a discretization basis with $N$ degrees of freedom for each variate ($N\in \N^*$), so that the discretization space is given by
$$
V_N := \mbox{\rm Span}\left\{ \psi_{i_1}^{(1)}(x_1) \cdots \psi_{i_d}^{(d)}(x_d), \; 1\leq i_1, \cdots, i_d \leq N \right\},
$$
where for all $1\leq j \leq d$, $\left( \psi_i^{(j)} \right)_{1\leq i \leq N}$ is a family of $N$ functions which only depend on the variate $x_j$. A Galerkin method 
consists in representing 
the solution $u$ of the initial PDE as
$$
u(x_1, \cdots, x_d) \approx \sum_{1\leq i_1, \cdots, i_d \leq N} \lambda_{i_1, \cdots,i_d} \psi_{i_1}^{(1)}(x_1) \cdots \psi_{i_d}^{(d)}(x_d), 
$$
and computing the set of $N^d$ real numbers $\left( \lambda_{i_1,\cdots, i_d} \right)_{1\leq i_1, \cdots, i_d \leq N}$. Thus, the size of the 
finite-dimensional problem to solve grows exponentially with the number of variates involved in the problem. Such methods cannot be implemented when $d$ is too large:
 this is the so-called \itshape curse of dimensionality\normalfont~\cite{Bellman}.

Several approaches have recently been proposed in order to circumvent this significant difficulty. Let us mention among others sparse grids \cite{SchwabPeter}, 
tensor formats \cite{Hackbusch}, 
reduced bases \cite{Maday} and adaptive polynomial approximations \cite{Cohen2}. 

\medskip

In this paper, we will focus on a particular kind of methods, originally introduced by Ladev\`eze {\em et~al.} to do time-space variable separation \cite{Ladeveze}, 
Chinesta {\em et~al.} to solve high-dimensional Fokker-Planck equations in the context of kinetic models for polymers 
\cite{Chinesta} and Nouy in the context of uncertainty quantification \cite{Nouy}, under the name of \itshape Progressive Generalized Decomposition \normalfont (PGD) methods.

Let us assume that each variate $x_j$ belongs to a subset $\cX_j$ of $\R^{m_j}$, where $m_j\in\N^*$ for all $1\leq j\leq d$. For each $d$-uplet $(r^{(1)}, \cdots, r^{(d)})$ of functions 
such that $r^{(j)}$ only depends on $x_j$ for all $1\leq j\leq d$, we call a \itshape tensor product function \normalfont and denote by $r^{(1)}\otimes \cdots \otimes r^{(d)}$ 
the function which depends on all the variates $x_1, \cdots, x_d$ and is defined by
$$
r^{(1)}\otimes \cdots \otimes r^{(d)}: \left\{ \begin{array}{ccc}
                                                \cX_1 \times \cdots \times \cX_d & \to & \R\\
                                                (x_1, \cdots, x_d) & \mapsto & r^{(1)}(x_1) \cdots r^{(d)}(x_d).\\
                                               \end{array}
\right .
$$
 
The approach of Ladev\`eze, Chinesta, Nouy and coauthors consists in approximating the function $u$ by a separate variable decomposition, i.e.
\begin{equation}\label{eq:sumexp}
u(x_1, \cdots, x_d) \approx \sum_{k=1}^{n} r_k^{(1)}(x_1) \cdots r_k^{(d)}(x_d) = \sum_{k=1}^{n} r_k^{(1)}\otimes \cdots \otimes r_k^{(d)}(x_1, \cdots, x_d),
\end{equation}
for some $n\in\N^*$. In the above sum, each term is a tensor product function. Each 
$d$-uplet of functions $\left(r_k^{(1)}, \cdots, r_k^{(d)}\right)$ is iteratively computed in a \itshape greedy \normalfont \cite{Temlyakov} way: once the first 
$k$ terms in the sum (\ref{eq:sumexp}) have been computed, they are fixed, and the $(k+1)^{th}$ term is obtained as the \itshape next best tensor product function 
\normalfont to approximate the 
solution. This will be made precise below.  

Thus, the algorithm consists 
in solving several low-dimensional problems whose dimensions scale linearly with the number of variates and may be implementable when classical methods are not. 
In this case, if we use a discretization basis with $N$ degrees of freedom 
per variate as above, the size of the discretized problems involved in the computation of a $d$-uplet $\left(r_k^{(1)}, \cdots , r_k^{(d)}\right)$ scales like $Nd$ and the 
total size of the discretization problems is $nNd$. 

This numerical strategy has been extensively studied for the resolution of (linear or nonlinear) elliptic problems \cite{Temlyakov,LBLM,Figueroa,CELgreedy,NouyFalco}. More precisely, let $u$ be defined 
as the unique solution of a minimization problem of the form
\begin{equation}\label{eq:min}
u = \mathop{\mbox{\rm argmin}}_{v\in V} \mathcal{E}(v),
\end{equation}
where $V$ is a reflexive Banach space of functions depending on the $d$ variates $x_1$, ..., $x_d$, and $\mathcal{E}: V \to \R$ is a coercive real-valued energy functional. Besides, 
for all $1\leq j \leq d$, let $V_{x_j}$ be a reflexive Banach space of functions which only depend on the variate $x_j$.  
The standard greedy algorithm reads: 
\begin{enumerate}
 \item set $u_0 = 0$ and $n=1$; 
\item find $\left(r_n^{(1)}, \cdots, r_n^{(d)}\right) \in V_{x_1}\times \cdots \times V_{x_d}$ such that
$$
\left(r_n^{(1)}, \cdots, r_n^{(d)}\right) \in  \mathop{\mbox{argmin}}_{\left( r^{(1)}, \cdots, r^{(d)}\right) \in V_{x_1} \times \cdots \times V_{x_d}} \mathcal{E} \left( u_{n-1} + r^{(1)} \otimes \cdots \otimes r^{(d)} \right),
$$
\item set $u_n = u_{n-1} + r_n^{(1)}\otimes \cdots \otimes r_n^{(d)}$ and $n=n+1$.  
\end{enumerate}
Under some natural assumptions on the spaces $V$, $V_{x_1}$, ..., $V_{x_d}$ and the energy functional $\mathcal{E}$, all the iterations of the greedy algorithm are well-defined and the sequence 
$(u_n)_{n\in\N^*}$ strongly converges in $V$ towards the solution $u$ of the original minimization problem (\ref{eq:min}).

This result holds in particular when 
$u$ is defined as the unique solution of 
$$
\left\{
\begin{array}{l}
 \mbox{\rm find } u \in V \mbox{\rm such that}\\
\forall v\in V, \; a(u,v) = l(v),\\
\end{array}
\right .
$$
where $V$ is a Hilbert space, $a$ a \itshape symmetric \normalfont continuous coercive bilinear form on $V\times V$ and $l$ a continuous linear form on $V$. In this case, $u$ is equivalently solution of a 
minimization problem of the form (\ref{eq:min}) with $\mathcal{E}(v) = \frac{1}{2}a(v,v) - l(v)$ for all $v\in V$. 

\medskip

However, when the function $u$ cannot be defined as the solution of a minimization problem of the form (\ref{eq:min}), designing efficient iterative algorithms is not an obvious task. 
This situation occurs typically when $u$ is defined as the solution of a \itshape non-symmetric \normalfont linear problem
$$
\left\{
\begin{array}{l}
 \mbox{\rm find } u \in V \mbox{\rm such that}\\
\forall v\in V, \; a(u,v) = l(v),\\
\end{array}
 \right .
$$
where $a$ is a non-symmetric continuous bilinear form on $V\times V$ and $l$ is a continuous linear form on $V$.

The aim of this article is to give an overview of the state of the art of the numerical methods based on the greedy iterative approach used in this non-symmetric linear context and of the
 remaining open questions concerning this issue. In Section~\ref{sec:sym}, we present the standard greedy algorithm for the resolution of symmetric coercive 
high-dimensional problems and the theoretical convergence results proved in this 
setting. Section~\ref{sec:nonsym} explains why a naive transposition of this algorithm for non-symmetric problems is doomed to failure and motivates the need 
for more subtle approaches. 
Section~\ref{sec:resmin} describes the certified algorithms existing in the literature for non-symmetric problems. All of them consist in \itshape symmetrizing \normalfont
 the original non-symmetric problem 
by minimizing the residual of the equation in a well-chosen norm. However, depending on the choice of the norm, either the conditioning of the discretized problems may behave badly 
or several intermediate problems may have to be solved online, which leads to a significant increase of simulation times and memory needs compared to the original algorithm in a symmetric linear coercive case.
 So far, there are no methods avoiding these two problems and 
for which there are theoretical convergence results in the general case. In Section~\ref{sec:dual}, we present some existing algorithms designed by Nouy \cite{NouyMinMax} and Lozinski \cite{Lozinski} to circumvent 
these difficulties and the partial theoretical results which are known for these algorithms. Section~\ref{sec:us} is concerned with another algorithm we propose, for which some 
partial convergence results are proved. In Section~\ref{sec:num}, the behaviors of the different algorithms presented here are illustrated on simple toy numerical examples. 
Lastly, we present in the Appendix some possible tracks to design other methods, but for which further work is needed.

\section{The symmetric coercive case}\label{sec:sym}

\subsection{Notation}\label{sec:notation}

Let us first introduce some notation. Let $d$ be a positive integer, $m_1$, ..., $m_d$ positive integers and $\cX_1$, ...,$\cX_d$ open subsets of $\R^{m_1}$, ..., $\R^{m_d}$ 
respectively. 

Let $\mu_{x_1}$, ..., $\mu_{x_d}$ denote measures on $\cX_1$, ..., $\cX_d$ respectively. Let $L^2(\cX_1; \mu_{x_1})$, ..., 
$L^2(\cX_d; \mu_{x_d})$ be associated $L^2$ spaces, i.e. vectorial spaces which are complete when endowed with the scalar products
\begin{align*}
 \forall f,g\in L^2(\cX_j; \mu_{x_j}), \quad & \langle f,g \rangle_{\cX_j} := \int_{\cX_j} f(x_j)g(x_j) \, \mu_{x_j}(dx_j),\; \forall 1\leq j \leq d,\\
\end{align*}
and their associated norms $\|\cdot\|_{\cX_1}$, ..., $\|\cdot\|_{\cX_d}$. For instance, in the case when $\cX_1 = (0,1)$ and $\mu_{x_1}$ is the standard Lebesgue measure on $\cX_1$, 
the spaces $L^2(0,1)$, $L^2_{\rm per}(0,1)$ and $L^2_0(0,1):=\left\{ f \in L^2(0,1), \; \int_0^1f = 0\right\}$ are examples of such $L^2$ spaces.

In the rest of this article, for the sake of simplicity, we will omit the reference to the measures $\mu_{x_1}$, ..., $\mu_{x_d}$ and denote by
 $L^2(\cX_1) = L^2(\cX_1; \mu_{x_1})$, ..., $L^2(\cX_d) = L^2(\cX_d; \mu_{x_d})$.

\medskip

We introduce the space 
$L^2( \cX_1\times \cdots \times \cX_d) := L^2(\cX_1) \otimes \cdots \otimes L^2(\cX_d)$. 
This space is a Hilbert space when endowed with the natural scalar product
$$
\forall f,g\in L^2(\cX_1\times \cdots \times \cX_d),\quad \langle f,g \rangle := \int_{\cX_1 \times \cdots \times \cX_d}f(x_1, \cdots , x_d)g(x_1, \cdots , x_d)\,\mu_{x_1}(dx_1) \, \cdots \mu_{x_d}(dx_d),
$$
and the associated norm is denoted by $\|\cdot \|_{\cX_1 \times \cdots \times \cX_d}$. 

\medskip

Let $V\subset L^2( \cX_1\times \cdots \times \cX_d)$, $V_{x_1}\subset L^2(\cX_1)$, ..., $V_{x_d}\subset L^2(\cX_d)$ 
be Hilbert spaces endowed respectively 
with scalar products denoted by $\langle \cdot, \cdot\rangle_V$, $\langle \cdot, \cdot \rangle_{V_{x_1}}$, 
..., $\langle \cdot, \cdot \rangle_{V_{x_d}}$ and associated norms $\|\cdot\|_V$, $\|\cdot\|_{V_{x_1}}$, ..., $\|\cdot\|_{V_{x_d}}$. 

We define $V'$, $V_{x_1}'$, ..., $V_{x_d}'$ as the dual spaces of $V$, $V_{x_1}$, ..., $V_{x_d}$ 
with respect to the $L^2$ scalar products $\langle \cdot, \cdot \rangle$, $\langle \cdot, \cdot \rangle_{\cX_1}$, ..., $\langle \cdot, \cdot \rangle_{\cX_d}$.
 These dual spaces are endowed with their natural norms $\|\cdot\|_{V'}$ etc.  

Lastly, the Riesz operator $R_V: V \to V'$ is defined by
$$
\forall v,w\in V, \;  \langle v, w\rangle_V  = \langle R_V v,w\rangle_{V',V}.
$$
It holds in particular that $\|v\|_V = \|R_V v\|_{V'}$. Similar operators $R_{V_{x_1}}$, ..., $R_{V_{x_d}}$ 
are introduced for the spaces $V_{x_1}$, ..., $V_{x_d}$. 

\medskip

For any $d$-uplet $\left( r^{(1)}, \cdots, r^{(d)}\right) \in V_{x_1} \times \cdots \times V_{x_d}$, we define the \itshape tensor product function \normalfont $r^{(1)} \otimes \cdots \otimes r^{(d)}$ as follows
$$
r^{(1)} \otimes \cdots \otimes r^{(d)} : \left\{ 
\begin{array}{ccc}
 \cX_1 \times \cdots \times \cX_d & \to & \R \\
(x_1, \cdots,x_d) & \mapsto & r^{(1)}(x_1) \cdots r^{(d)}(x_d).\\
\end{array}
\right .
$$

\medskip

In the particular case when $d=2$, we shall denote respectively $x_1$, $\cX_1$, $m_1$, $V_{x_1}$ by $x$, $\cX$, $m_x$, $V_x$ and $x_2$, $\cX_2$, $m_2$, $V_{x_2}$ by $t$, $\cT$, $m_t$, $V_t$.

\medskip

Besides, for any Banach spaces $H_1$, $H_2$, the space of bounded linear operators from $H_1$ to $H_2$ will be denoted by $\mathfrak{L}(H_1, H_2)$. 
 
 \subsection{Theoretical results}

We recall here the theoretical framework of the standard greedy algorithm in the coercive symmetric case.

\medskip

Let us consider the problem
\begin{equation}\label{eq:sym}
\left\{
\begin{array}{l}
 \mbox{find }u\in V\mbox{ such that}\\
\forall v\in V, \; a(u,v) = l(v),\\
\end{array}
\right.
\end{equation}
where
\begin{itemize}
 \item $a(\cdot, \cdot)$ is a \itshape symmetric, coercive \normalfont continuous bilinear form on $V\times V$;
\item $l$ is a continuous linear form on $V$.
\end{itemize}

Then, problem (\ref{eq:sym}) is equivalent to the minimization problem
\begin{equation}\label{eq:minpb}
u  = \mathop{\mbox{argmin}}_{v\in V} \cE(v),
\end{equation}
where 
\begin{equation}\label{eq:symener}
 \forall v\in V, \; \cE(v) := \frac{1}{2}a(v,v) - l(v).
\end{equation}

\medskip

The greedy algorithm reads: 
\begin{enumerate}
 \item let $u_0 = 0$ and $n=1$; 
\item define $\left(r_n^{(1)}, \cdots, r_n^{(d)} \right) \in V_{x_1} \times \cdots \times V_{x_d}$ such that 
\begin{equation}\label{eq:algomin}
\left(r_n^{(1)}, \cdots, r_n^{(d)} \right) \in \mathop{\mbox{\rm argmin}}_{\left( r^{(1)}, \cdots, r^{(d)} \right) \in V_{x_1} \times \cdots \times V_{x_d}} \cE\left( u_{n-1} + r^{(1)} \otimes \cdots \otimes r^{(d)} \right);
\end{equation}
\item define $u_n = u_{n-1} + r_n^{(1)} \otimes \cdots \otimes r_n^{(d)}$ and set $n=n+1$. 
\end{enumerate}

Let us denote by 
\begin{equation}\label{eq:defSigma}
\Sigma := \left\{ r^{(1)} \otimes \cdots \otimes r^{(d)}, \; r^{(1)}\in V_{x_1}, \cdots, r^{(d)} \in V_{x_d} \right\}
\end{equation}
and make the following assumptions:
\begin{itemize}
 \item[(A1)] $\overline{\mbox{\rm Span}(\Sigma)}^V = V$;
\item[(A2)] $\Sigma$ is weakly closed in $V$.
\end{itemize}
These assumptions are usually satisfied in the case of classical Sobolev spaces~\cite{CELgreedy}.

\begin{theorem}\label{th:sym}
Assume that (A1) and (A2) are satisfied. Then, for all $n\in\N^*$, there exists at least one solution 
$\left(r^{(1)}_n,\cdots, r_n^{(d)}\right)\in V_{x_1}\times \cdots \times V_{x_d}$ 
(not necessarily unique) to (\ref{eq:algomin}) and any solution satisfies $r^{(1)}_n\otimes \cdots \otimes r^{(d)}_n \neq 0$ if and only if $u_{n-1} \neq u$. 
Besides, the sequence $(u_n)_{n\in\N^*}$ strongly converges towards $u$ in $V$. 
\end{theorem}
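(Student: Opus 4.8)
The plan is to rely on the variational structure of the minimization problem and the weak compactness built into assumptions (A1)--(A2), following the standard template for greedy algorithms in the symmetric coercive case. Since $a$ is symmetric, coercive and continuous, the energy $\cE(v) = \frac12 a(v,v) - l(v)$ is strongly convex and continuous on the Hilbert space $V$, hence coercive in the sense that $\cE(v) \to +\infty$ as $\|v\|_V \to +\infty$, and weakly lower semicontinuous. I would also record at the outset that $\cE(v) - \cE(u) = \frac12 a(v-u, v-u)$ for all $v \in V$, so that $\cE(v) - \cE(u)$ is equivalent to $\|v-u\|_V^2$ up to the coercivity and continuity constants of $a$; controlling the energy decrement is therefore the same as controlling the error.

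\textbf{Step 1: well-posedness of each greedy step.} Fix $n$ and set $w = u_{n-1}$. I would show the map $\Sigma \ni z \mapsto \cE(w + z)$ attains its infimum on $\Sigma$. Take a minimizing sequence $z_k = r_k^{(1)} \otimes \cdots \otimes r_k^{(d)} \in \Sigma$. Since $\cE(w + z_k)$ is bounded above and $\cE$ is coercive, $\|w + z_k\|_V$ is bounded, hence $\|z_k\|_V$ is bounded; by reflexivity of $V$ a subsequence converges weakly to some $z_\star \in V$, and by (A2) $z_\star \in \Sigma$, so $z_\star = r^{(1)} \otimes \cdots \otimes r^{(d)}$ for some $d$-uplet. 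Weak lower semicontinuity of $\cE$ gives $\cE(w + z_\star) \le \liminf_k \cE(w + z_k) = \inf_{z \in \Sigma} \cE(w+z)$, so $z_\star$ is a minimizer and (\ref{eq:algomin}) has a solution. The characterization $r_n^{(1)} \otimes \cdots \otimes r_n^{(d)} \neq 0 \iff u_{n-1} \neq u$ follows because: if $u_{n-1} = u$ then $\cE(u_{n-1} + z) \ge \cE(u) = \cE(u_{n-1})$ for all $z$ with equality only at $z=0$, forcing the minimizer to be $0$; conversely if $u_{n-1} \neq u$, then since $u - u_{n-1} \in V = \overline{\mathrm{Span}(\Sigma)}^V$ by (A1), there is $z \in \Sigma$ with $a(u - u_{n-1}, z) \neq 0$, and a first-order expansion $\cE(u_{n-1} + tz) = \cE(u_{n-1}) - t\, a(u-u_{n-1}, z) + \frac{t^2}{2} a(z,z)$ shows the energy strictly decreases for small $t$ of the right sign, so the minimum is not attained at $0$.

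\textbf{Step 2: convergence.} The sequence $(\cE(u_n))_{n}$ is nonincreasing (at each step we minimize over a set containing $0$) and bounded below (by $\cE(u)$, since $\cE$ is bounded below), hence convergent; consequently $(u_n)$ is bounded in $V$ by coercivity, and $\cE(u_{n-1}) - \cE(u_n) \to 0$. Extract a weakly convergent subsequence $u_{n_j} \wlim u_\infty$. The crux is to show $u_\infty = u$, equivalently $a(u - u_\infty, z) = 0$ for every $z \in \Sigma$ (which by (A1) and bilinearity forces $a(u - u_\infty, v) = 0$ for all $v \in V$, hence $u_\infty = u$ by coercivity). The mechanism: the optimality of the $n$-th step implies, for any fixed $z \in \Sigma$ and any $t \in \R$, $\cE(u_n) \le \cE(u_{n-1} + tz)$, i.e. $\cE(u_n) - \cE(u_{n-1}) \le -t\, a(u - u_{n-1}, z) + \frac{t^2}{2} a(z,z) - \big(\cE(u) - \cE(u_{n-1})\big)$; a careful rearrangement using $\cE(u)-\cE(u_{n-1}) = -\frac12 a(u-u_{n-1}, u-u_{n-1})$ together with $\cE(u_{n-1}) - \cE(u_n) \to 0$ and the boundedness of $(u_n)$ yields that for each fixed $z$, $a(u - u_n, z) \to 0$ along the whole sequence (or at least the relevant subsequence), forcing $a(u - u_\infty, z) = 0$. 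Finally, once $u_\infty = u$ is identified, one upgrades weak convergence to strong: $\|u_n - u\|_V^2$ is controlled by $\cE(u_n) - \cE(u) = \frac12 a(u_n - u, u_n - u)$ up to constants, and a standard argument shows $\cE(u_n) \to \cE(u)$ — indeed $\cE(u_n) \ge \cE(u)$ and $\limsup \cE(u_n) \le \cE(u)$ because the energy decrement at each step dominates (up to a fixed constant) the ``best'' decrement achievable along any direction $z \in \Sigma$, and $u$ lies in $\overline{\mathrm{Span}(\Sigma)}^V$.

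\textbf{Main obstacle.} The delicate point is Step 2: converting the vanishing of successive energy decrements $\cE(u_{n-1}) - \cE(u_n) \to 0$ into the vanishing of the residual $a(u - u_n, z)$ along $\Sigma$, and then into $\cE(u_n) \to \cE(u)$. The subtlety is that minimizing over the nonconvex manifold $\Sigma$ (not over a linear space) gives only a one-dimensional optimality inequality in each direction $z \in \Sigma$, and one must combine these with the a priori bound on $(u_n)$ and some uniformity to control the full residual; handling the nonuniqueness of the greedy iterates and the fact that $\Sigma$ is merely weakly closed (not convex) is exactly where assumptions (A1)--(A2) and the quadratic structure of $\cE$ are indispensable. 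I would expect to lean on the argument of~\cite{CELgreedy} for this part.
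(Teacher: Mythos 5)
First, a point of comparison: the paper itself states Theorem~\ref{th:sym} without proof, quoting it from the literature (\cite{LBLM,Figueroa,CELgreedy}), so your attempt can only be measured against the standard argument there. Your Step~1 is that standard argument and is essentially complete: minimizing sequence, boundedness via coercivity of $\cE$, weak compactness, (A2) to keep the weak limit in $\Sigma$, weak lower semicontinuity of $\cE$, and the expansion $\cE(u_{n-1}+tz)=\cE(u_{n-1})-t\,a(u-u_{n-1},z)+\frac{t^2}{2}a(z,z)$ (using that $\Sigma$ is a cone, so $tz\in\Sigma$) for the characterization of the zero minimizer. The first half of your Step~2 is also sound: optimizing the same expansion in $t$ gives $\frac{1}{2}\sup_{z\in\Sigma,\,z\neq 0}a(u-u_{n-1},z)^2/\|z\|_a^2\le \cE(u_{n-1})-\cE(u_n)\to 0$, whence every weak cluster point $u_\infty$ satisfies $a(u-u_\infty,z)=0$ for all $z\in\Sigma$ and (A1) plus coercivity yield $u_n\rightharpoonup u$.

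The genuine gap is exactly at the point you flag as the main obstacle: the upgrade from weak to strong convergence. Your proposed mechanism — $\limsup\cE(u_n)\le\cE(u)$ "because the energy decrement at each step dominates the best decrement achievable along any direction $z\in\Sigma$, and $u\in\overline{\mathrm{Span}(\Sigma)}^V$" — does not work as stated: that domination is precisely the inequality displayed above, and it only yields the vanishing of the greedy residual $\sup_{z\in\Sigma}a(u-u_n,z)/\|z\|_a$, i.e.\ the weak-type information you already have. Monotone energy plus weak convergence does not imply $\cE(u_n)\to\cE(u)$ (think of $u_n=u+e_n$ with $(e_n)$ $a$-orthonormal: the energy is constant and the convergence only weak). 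The missing ingredients in the standard proof are: (i) the Euler equation of the greedy step, which gives $a(u-u_n,z_n)=0$ for the increment $z_n=r_n^{(1)}\otimes\cdots\otimes r_n^{(d)}$, hence the Pythagoras identity $\|u-u_{n-1}\|_a^2=\|u-u_n\|_a^2+\|z_n\|_a^2$ and the summability $\sum_n\|z_n\|_a^2<\infty$; (ii) the maxmin characterization $\|z_{n+1}\|_a=\sup_{z\in\Sigma}a(u-u_n,z)/\|z\|_a$ (Lemma~\ref{lem:maxminform}); and (iii) a Jones/DeVore--Temlyakov-type argument: writing $\|u-u_n\|_a^2=a(u-u_n,u)-a(u-u_n,u_n)$, the first term tends to $0$ by (A1), (ii) and an $\varepsilon$-approximation of $u$ by finite combinations of elements of $\Sigma$, while the second is bounded by $\|z_{n+1}\|_a\sum_{k\le n}\|z_k\|_a\le C\sqrt{n}\,\|z_{n+1}\|_a$, and $\liminf_n\sqrt{n}\,\|z_{n+1}\|_a=0$ by the $\ell^2$ summability; monotonicity of $\|u-u_n\|_a$ then promotes the subsequential limit to the whole sequence. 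Without this (or an equivalent device) the strong convergence — which is the substance of the theorem — is not established; deferring to \cite{CELgreedy} is honest, but it leaves the central step unproved in your write-up.
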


The following Lemma will be used later. Although the proof is given in~\cite{Figueroa}, we recall it here for the sake of self-containedness.
\begin{lemma}\label{lem:maxminform}
For all $v\in V$, let us denote by $\|v\|_a:= \sqrt{a(v,v)}$. Then, for all $n\in\N^*$, 
\begin{equation}\label{eq:eqmaxmin}
\left \|r_n^{(1)}\otimes \cdots \otimes r_n^{(d)}\right\|_a = \mathop{\sup}_{(r^{(1)}, \cdots , r^{(d)}) \in V_{x_1} \times \cdots \times V_{x_d}, \; r^{(1)}\otimes \cdots \otimes r^{(d)} \neq 0}\frac{a\left( u-u_{n-1}, r^{(1)} \otimes \cdots \otimes r^{(d)}\right)}{\left\| r^{(1)} \otimes \cdots \otimes r^{(d)}\right\|_a}.
\end{equation}
\end{lemma}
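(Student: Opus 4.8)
The plan is to recast the minimization (\ref{eq:algomin}) as a best-approximation problem in the energy norm $\|\cdot\|_a$, and then to exploit the fact that $\Sigma$ is a cone: it is stable under multiplication by an arbitrary real scalar, since $\lambda\,(r^{(1)}\otimes\cdots\otimes r^{(d)})=(\lambda r^{(1)})\otimes r^{(2)}\otimes\cdots\otimes r^{(d)}$.

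First I would rewrite the energy. Using that $u$ solves (\ref{eq:sym}), i.e. $l(v)=a(u,v)$ for all $v\in V$, and that $a$ is symmetric, completing the square gives, for every $z\in\Sigma$,
\begin{equation}\label{eq:Erewrite}
\cE(u_{n-1}+z)=\frac12\left\|z-(u-u_{n-1})\right\|_a^2-\frac12\|u\|_a^2 .
\end{equation}
Since the last term does not depend on $z$, a $d$-uplet solves (\ref{eq:algomin}) if and only if $z_n:=r_n^{(1)}\otimes\cdots\otimes r_n^{(d)}$ minimizes $z\mapsto\|z-w\|_a$ over $z\in\Sigma$, where $w:=u-u_{n-1}$.

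Next, because $\Sigma$ is a cone, $t z_n\in\Sigma$ for every $t\in\R$, so $t\mapsto\|tz_n-w\|_a^2$ attains its minimum over $\R$ at $t=1$; differentiating yields the Euler relation $a(w-z_n,z_n)=0$, hence $a(w,z_n)=\|z_n\|_a^2$ and $\|z_n-w\|_a^2=\|w\|_a^2-\|z_n\|_a^2$. This already gives the inequality $\ge$ in (\ref{eq:eqmaxmin}): if $z_n\neq 0$ the supremum is at least $a(w,z_n)/\|z_n\|_a=\|z_n\|_a$, and if $z_n=0$ then $w=0$ (by Theorem~\ref{th:sym}, or directly because minimality of $0$ forces $a(w,z)=0$ for all $z\in\Sigma$, hence $w=0$ by (A1) and coercivity), in which case both sides of (\ref{eq:eqmaxmin}) vanish.

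For the reverse inequality I would fix any $z\in\Sigma\setminus\{0\}$, minimize the one-variable quadratic $t\mapsto\|tz-w\|_a^2$ over $t\in\R$ (again legitimate since $tz\in\Sigma$), and compare with the value attained at the global minimizer $z_n$:
\begin{equation}\label{eq:compare}
\|w\|_a^2-\|z_n\|_a^2=\|z_n-w\|_a^2\le\min_{t\in\R}\|tz-w\|_a^2=\|w\|_a^2-\frac{a(w,z)^2}{\|z\|_a^2},
\end{equation}
so that $a(w,z)/\|z\|_a\le\|z_n\|_a$; taking the supremum over $z$ gives $\le$ in (\ref{eq:eqmaxmin}), and combining the two bounds proves the claim. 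All the computations are elementary; the only points deserving care are the degenerate case $z_n=0$ and the fact that one may only perturb \emph{within} the cone $\Sigma$ (which has no linear structure) — but this is precisely what makes the ray-wise arguments in the last two paragraphs valid, so I do not expect a real obstacle here.
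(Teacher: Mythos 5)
Your proof is correct and follows essentially the same route as the paper's: recast (\ref{eq:algomin}) as a best approximation of $u-u_{n-1}$ in the $\|\cdot\|_a$ norm, extract the orthogonality relation $a(u-u_{n-1},z_n)=\|z_n\|_a^2$, and compare with the optimally rescaled competitor $t^\ast z$ for each nonzero $z\in\Sigma$. The only differences are cosmetic: you obtain the key identity by restricting to the ray $\R z_n$ rather than writing the full Euler equations, treat general $n$ directly instead of $n=1$, and handle the degenerate case $z_n=0$ explicitly, which the paper leaves implicit.
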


\begin{proof}
 Let us prove (\ref{eq:eqmaxmin}) for $n=1$. The proof is similar for larger $n\in\N^*$. The $d$-tuple $\left(r_1^{(1)}, \cdots, r_1^{(d)}\right)\in V_{x_1}\times \cdots \times V_{x_d}$ 
solution of (\ref{eq:algomin}) for $n=1$ equivalently satisfies:
\begin{equation}\label{eq:2ndform}
\left( r_1^{(1)} , \cdots , r_1^{(d)} \right) \in \mathop{\mbox{\rm argmin}}_{\left( r^{(1)}, \cdots , r^{(d)} \right) \in V_{x_1} \times \cdots \times V_{x_d}}\frac{1}{2}\left\|u - r^{(1)} \otimes \cdots \otimes r^{(d)}\right\|_a^2. 
\end{equation}
The Euler equations associated to this minimization problem read: for all $\left( \delta r^{(1)}, \cdots, \delta r^{(d)}\right) \in V_{x_1} \times \cdots \times V_{x_d}$, 
\begin{eqnarray*}
&& a\left( r_1^{(1)} \otimes  \cdots \otimes r_1^{(d)} , r_1^{(1)} \otimes \cdots \otimes r_1^{(d-1)} \otimes \delta r^{(d)} + r_1^{(1)} \otimes \cdots \otimes r_1^{(d-2)} \otimes \delta r^{(d-1)} \otimes r_1^{(d)} + \cdots + \delta r^{(1)} \otimes r_1^{(2)} \otimes \cdots \otimes r_1^{(d)} \right)\\
&& =  a\left( u, r_1^{(1)} \otimes \cdots \otimes r_1^{(d-1)} \otimes \delta r^{(d)} + r_1^{(1)} \otimes \cdots \otimes r_1^{(d-2)} \otimes \delta r^{(d-1)} \otimes r_1^{(d)} + \cdots + \delta r^{(1)} \otimes r_1^{(2)} \otimes \cdots \otimes r_1^{(d)} \right),\\
\end{eqnarray*}
which implies that 
\begin{equation}\label{eq:normeq}
 \left\|r_1^{(1)}\otimes \cdots \otimes r_1^{(d)} \right\|_a^2 = a\left( u, r_1^{(1)}\otimes \cdots \otimes r_1^{(d)}\right).
\end{equation} 
Let now $\left( r^{(1)}, \cdots , r^{(d)} \right) \in V_{x_1} \times \cdots \times V_{x_d}$ be such that $r^{(1)} \otimes \cdots \otimes r^{(d)} \neq 0$. Using (\ref{eq:2ndform}) and 
(\ref{eq:normeq}), it holds that
$$
\left\| u -\frac{a\left( u, r_1^{(1)}\otimes \cdots \otimes r_1^{(d)} \right)}{\left\| r_1^{(1)} \otimes \cdots \otimes r_1^{(d)} \right\|_a^2} r_1^{(1)}\otimes \cdots \otimes r_1^{(d)} \right\|_a^2 = \left\| u - r_1^{(1)}\otimes \cdots \otimes r_1^{(d)} \right\|_a^2 \leq \left\| u - \frac{a\left( u, r^{(1)} \otimes \cdots \otimes r^{(d)}\right)}{\left\| r^{(1)} \otimes \cdots \otimes r^{(d)}\right\|_a^2}r^{(1)} \otimes \cdots \otimes r^{(d)}\right\|_a^2.
$$
Therefore, 
$$
\frac{a\left( u, r_1^{(1)}\otimes \cdots \otimes r_1^{(d)} \right)^2}{\left\| r_1^{(1)} \otimes \cdots \otimes r_1^{(d)} \right\|_a^2} \geq \frac{a\left( u, r^{(1)}\otimes \cdots \otimes r^{(d)} \right)^2}{\left\| r^{(1)} \otimes \cdots \otimes r^{(d)} \right\|_a^2}.
$$
Taking the supremum over all $\left( r^{(1)}, \cdots , r^{(d)} \right)\in V_{x_1} \times \cdots \times V_{x_d}$ such that $r^{(1)} \otimes \cdots \otimes r^{(d)}\neq 0$ yields the result.  
\end{proof}

Equation (\ref{eq:eqmaxmin}) implies in particular that for all $n\in\N^*$, 
\begin{equation}\label{eq:touse}
\left\| r_n^{(1)} \otimes \cdots \otimes \cdots r_n^{(d)} \right\|_a = \mathop{\sup}_{\left( r^{(1)}, \cdots , r^{(d)} \right) \in V_{x_1} \times \cdots \times V_{x_d} } \frac{l\left( r^{(1)} \otimes \cdots \otimes r^{(d)} \right) - a\left( u_{n-1}, r^{(1)} \otimes \cdots \otimes r^{(d)}\right)}{\left\| r^{(1)} \otimes \cdots \otimes r^{(d)} \right\|_a}.
\end{equation}

\vspace{1cm}

Let us rewrite the greedy algorithm in the particular case when $d=2$. 
\begin{enumerate}
 \item Let $u_0 = 0$ and $n=1$;
\item define $\left(r_n,s_n \right) \in V_x \times V_t$ such that 
\begin{equation}\label{eq:algomin2}
\left(r_n,s_n \right) \in \mathop{\mbox{\rm argmin}}_{\left( r, s \right) \in V_x \times V_t} \cE\left( u_{n-1} + r \otimes s \right);
\end{equation}
\item define $u_n = u_{n-1} + r_n \otimes s_n$ and set $n=n+1$. 
\end{enumerate}

For the sake of simplicity, in the rest of the article, all the algorithms will be presented in the case when $d=2$. The generalization of the approaches to a
 larger number 
of variates $d$ is straightforward unless mentioned. 

\medskip

The Euler equations associated to the minimization problem (\ref{eq:algomin2}) read
\begin{equation}\label{eq:Eulermin}
a(u_{n-1} + r_n\otimes s_n, \delta r \otimes s_n + r_n\otimes \delta s) = l(\delta r \otimes s_n + r_n \otimes \delta s), \; \forall (\delta r, \delta s)\in V_x \times V_t. 
\end{equation}

As a consequence of Theorem~\ref{th:sym}, provided that the set
\begin{equation}\label{eq:defSigma2}
\Sigma = \left\{ r\otimes s, \; r\in V_x, \; s\in V_t\right\}
\end{equation}
satisfies assumptions (A1) and (A2), at the first iteration of the algorithm ($n=1$), as soon as the form $l$ is nonzero, 
there exists at least one solution $(r_1, s_1)\in V_x\times V_t$ of
$$
a(r_1\otimes s_1, \delta r \otimes s_1 + r_1\otimes \delta s) = l(\delta r \otimes s_1 + r_1 \otimes \delta s), \; \forall (\delta r, \delta s)\in V_x \times V_t, 
$$ 
such that $r_1\otimes s_1\neq 0$.

\medskip

In practice, at each iteration $n\in\N^*$, a pair $(r_n,s_n)\in V_x \times V_t$ is computed via the resolution of the Euler equations (\ref{eq:Eulermin}) using a fixed-point procedure which 
reads as follows: 
\begin{itemize}
 \item choose $\left(r_n^{(0)}, s_n^{(0)}\right)\in V_x \times V_t$ and set $m=1$;
\item find $\left(r_n^{(m)}, s_n^{(m)}\right)\in V_x \times V_t$ such that
\begin{equation}\label{eq:fp}
\left\{
\begin{array}{l}
a\left(u_{n-1} + r^{(m)}_n\otimes s^{(m-1)}_n, \delta r \otimes s^{(m-1)}_n\right) = l\left(\delta r \otimes s^{(m-1)}_n \right), \; \forall \delta r \in V_x, \\  
a\left(u_{n-1} + r^{(m)}_n\otimes s^{(m)}_n, r^{(m)}_n\otimes \delta s\right) = l\left(r^{(m)}_n \otimes \delta s\right), \; \forall \delta s\in V_t;\\ 
\end{array}
\right. 
\end{equation}
\item set $m=m+1$.
\end{itemize}

This fixed-point algorithm is numerically observed to converge exponentially fast in most situations, although, at least to our knowledge, there is no rigorous proof in the general case. 

\section{The non-symmetric case} \label{sec:nonsym}

\subsection{General framework}

Let us now consider the case of a non-symmetric linear problem of the form
\begin{equation}\label{eq:nonsym}
\left\{
\begin{array}{l}
 \mbox{find }u\in V\mbox{ such that}\\
\forall v\in V, \; a(u,v) = l(v),\\
\end{array}
\right.
\end{equation}
where
\begin{itemize}
 \item $a(\cdot, \cdot)$ is a \bfseries nonsymmmetric \normalfont continuous bilinear form on $V\times V$;
\item $l$ is a continuous linear form on $V$.
\end{itemize}
In the rest of the article, we will assume that 
\begin{itemize}
 \item[(A3)] problem (\ref{eq:nonsym}) has a unique solution $u\in V$ for any continuous linear form $l\in \mathfrak{L}(V,\R)$. 
\end{itemize}

\medskip

We denote by $\cA\in \mathfrak{L}(V,V)$ the operator defined by
$$
\forall v,w\in V,\; \langle \cA v,w\rangle_V = a(v,w),
$$
and by $\mathcal{L}$ the element of $V$ such that
$$
\forall v\in V, \; \langle \cL, v \rangle_V = l(v).
$$

We also introduce the operator $A: V \to V'$ and the linear form $L\in V'$ defined by $A = R_V \cA$ and $L = R_V \cL$ so that the unique solution $u$ to (\ref{eq:nonsym})
 is also the unique solution to the problem
$$
\left\{ 
\begin{array}{l}
 \mbox{find }u\in V\mbox{ such that}\\
Au = L \mbox{ in }V'. \\
\end{array}
\right .
$$

It follows from assumption (A3) that $\cA$ and $A$ are invertible operators. 

\subsection{Prototypical examples}

Let us present two prototypical examples we will refer to throughout the rest of the paper. 
\begin{itemize}
 \item The first one is
\begin{equation}\label{eq:UQ}
\left\{
\begin{array}{l}
 \mbox{find } u \in H^1_0(\cX)\otimes L^2(\cT) \mbox{ such that}\\
- \Delta_x u + b_x\cdot \nabla_x u + u = f \mbox{ in }\cD'(\cX\times \cT),\\
\end{array}
\right .
\end{equation}
with $f\in H^{-1}(\cX)\otimes L^2(\cT)$ and $b_x \in \R^{m_x}$. 
For this problem, $V = H^1_0(\cX)\otimes L^2(\cT)$, $V' = H^{-1}(\cX) \otimes L^2(\cT)$ and
$$
\begin{array}{l}
 \forall u,v \in V,\;  a(u,v) =  \int_{\cX \times \cT} \left(\nabla_x u \cdot \nabla_x v + v (b_x\cdot \nabla_x u) + uv\right),\\
\forall v \in V, \; l(v) = \int_{\cT} \langle f, v\rangle_{H^{-1}(\cX), H^1_0(\cX)} .\\
\end{array}
$$
In this case, $A = -\Delta_x + b_x\cdot \nabla_x +1$. 

\item The second example is
\begin{equation}\label{eq:Poisson}
\left\{
\begin{array}{l}
 \mbox{find } u \in H^1_0(\cX \times \cT) \mbox{ such that}\\
- \Delta_{x,t} u + b\cdot \nabla_{x,t} u + u = f \mbox{ in }\cD'(\cX\times \cT),\\
\end{array}
\right .
\end{equation}
with $f\in H^{-1}(\cX\times \cT)$ and $b=(b_x, b_t) \in \R^{m_x} \times \R^{m_t}$. 
For this problem, $V =  H^1_0(\cX\times \cT)$, $V' = H^{-1}(\cX, \cT)$ and
$$
\begin{array}{l}
 \forall u,v \in V, \; a(u,v) = \int_{\cX \times \cT} \left(\nabla_{x,t} u \cdot \nabla_{x,t} v + v (b\cdot \nabla_{x,t} u) + uv\right),\\
\forall v \in V, \; l(v) = \langle f, v\rangle_{H^{-1}(\cX\times \cT), H^1_0(\cX\times \cT)}.\\
\end{array}
$$
In this case, $A = -\Delta_{x,t} + b\cdot \nabla_{x,t} +1$. 
\end{itemize}

\subsection{Failure of the standard greedy algorithm}\label{sec:galerkin}

Problem (\ref{eq:nonsym}) cannot be written as a minimization problem of the form (\ref{eq:minpb}) with an energy functional given by (\ref{eq:symener}). 
The definition of the greedy algorithm via the 
minimization problems (\ref{eq:algomin}) or (\ref{eq:algomin2}) cannot therefore be transposed to this case. However, a natural way to define the iterations of a greedy algorithm for the non-symmetric problem 
(\ref{eq:nonsym}) is to define iteratively for $n\in\N^*$ the pair $(r_n,s_n)\in V_x \times V_t$ as a solution of the following equation
\begin{equation}\label{eq:algnaive}
a(u_{n-1} + r_n\otimes s_n, \delta r \otimes s_n + r_n\otimes \delta s) = l(\delta r \otimes s_n + r_n \otimes \delta s), \; \forall (\delta r, \delta s)\in V_x \times V_t,  
\end{equation}
by analogy with the Euler equations (\ref{eq:Eulermin}). This is the so-called \itshape PGD-Galerkin \normalfont algorithm~\cite{Nouyprep}. 

\medskip

Actually, there are cases when $l\neq 0$ and any solution $(r_1,s_1) \in V_x \times V_t$ of the first iteration of the algorithm
\begin{equation}\label{eq:Eulerfirst}
a(r_1\otimes s_1, \delta r \otimes s_1 + r_1\otimes \delta s) = l(\delta r \otimes s_1 + r_1 \otimes \delta s), \; \forall (\delta r, \delta s)\in V_x \times V_t, 
\end{equation}
necessarily satisfies $r_1\otimes s_1 = 0$. Such an algorithm cannot converge since the approximation 
$\dps u_n = \sum_{k=1}^n r_k\otimes s_k$ given by the algorithm is equal to $0$ for any $n\in \N^*$. Besides, this situation may occur even 
when the norm of the antisymmetric part of the bilinear form $a(\cdot, \cdot)$ is arbitrarily small. 

\medskip

Let us give an explicit example. 

\begin{example}\label{ex:continuous}
 Let $\cX = \cT = (-1,1)$ and $\mu_x$ (respectively $\mu_t$) be the Lebesgue measure on $\cX$ (respectively on $\cT$). 
Let $b\in \R$, $V_x = H^1_{\rm per}(-1,1)$, $V_{t} = L^2(-1,1)$ and 
$V = V_x \otimes V_t$. 
Consider the non-symmetric problem (\ref{eq:nonsym}) with
$$
\forall v, w\in V, \; a(v,w) = \int_{\cX\times \cT} \left(\nabla_x v \cdot \nabla _x w + (b\cdot \nabla_x v) w + vw\right),
$$ 
and
$$
\forall v\in V, \; l(v) = \int_{\cX\times \cT} fv, 
$$
with $f\in  L^2_{\rm per}(-1,1) \otimes L^2(-1,1)$. 

\medskip

Problem (\ref{eq:nonsym}) is equivalent to
\begin{equation}
 \left\{
\begin{array}{l}
 \mbox{ find }u\in H^1_{\rm per}(-1,1) \otimes L^2(-1,1) \mbox{ such that }\\
- \Delta_x u + b\nabla_x u + u = f \quad \mbox{in }\cD'(\R\times \cT).\\
\end{array}
\right .
\end{equation}

In this context, equations (\ref{eq:Eulerfirst}) read
\begin{equation}\label{eq:ELnonsym}
 \left\{
\begin{array}{l}
\mbox{find }(r_1,s_1) \in H^1_{\rm per}(-1,1) \times L^2(-1,1)  \mbox{ such that }\\
 \left[\int_{-1}^1 |s_1(t)|^2\,dt\right] \left( - r_1''(x) + br_1'(x) + r_1(x)\right) = \int_{-1}^1 f(x,t) s_1(t)\,dt,\\
\left[ \int_{-1}^1 \left(|r_1'(x)|^2 + |r_1(x)|^2\right)\,dx \right] s_1(t) = \int_{-1}^1 f(x,t)r_1(x)\,dx,\\
\end{array}
\right .
\end{equation}
since the periodic boundary conditions on $r_1$ imply that $\int_{-1}^1 r_1(x)r_1'(x)\,dx = 0$. 

\medskip

Unlike the symmetric case, there exists an infinite set of functions $f\in L^2_{\rm per}(-1,1) \otimes L^2(-1,1)$ such that $f\neq 0$ and any solution $(r_1,s_1)\in V_x \times V_t$ of 
equations (\ref{eq:ELnonsym}) necessarily satisfies $r_1\otimes s_1 = 0$ for any arbitrarily small value of $|b|$. 
This is the case for example when $f(x,t) = \phi(x-t)$ for all $(x,t) \in \R\times (-1,1)$ with $\phi\in L^2_{\rm per}(-1,1)$ an odd real-valued function. 

Let us argue by contradiction. If $(r_1,s_1)\in V_x \times V_t$ is a solution to (\ref{eq:ELnonsym}) such that $r_1\otimes s_1\neq 0$, up to some rescaling, we can assume that 
$$
\int_{-1}^1 |s_1(t)|^2\,dt  = \int_{-1}^1 \left(|r_1'(x)|^2 + |r_1(x)|^2\right)\,dx  = \lambda >0.
$$
Thus, we can rewrite (\ref{eq:ELnonsym}) as
\begin{eqnarray*}
 - r_1''(x) + b r_1'(x)+ r_1(x) & = & \frac{1}{\lambda} \int_{-1}^1 f(x,t) s_1(t)\,dt,\\
s_1(t) & = & \frac{1}{\lambda} \int_{-1}^1 f(x,t)r_1(x)\,dx.\\
\end{eqnarray*}
Plugging the second equation into the first one, we obtain
\begin{equation}\label{eq:befFour}
 - r_1''(x) + b r_1'(x)+ r_1(x)  =  \frac{1}{\lambda^2} \int_{-1}^1 \left(\int_{-1}^1f(x,t) f(y,t)\,dt\right) r_1(y)\,dy.
\end{equation}
Let us denote by $g(x,y) = \int_{-1}^1f(x,t) f(y,t)\,dt$ for all $(x,y)\in \R^2$. As $\phi$ is an odd, $2$-periodic function, it holds that
\begin{eqnarray*}
 g(x,y) & =& \int_{-1}^1f(x,t) f(y,t)\,dt\\
& = & \int_{-1}^1 \phi(x-t)\phi(y-t)\,dt\\
& = & - \int_{-1}^1 \phi(x-t) \phi(t-y)\,dt\\
& = & -\int_{-1 +y}^{1+y} \phi(x-y-u) \phi(u)\,du\\
& = & -\int_{-1}^{1} \phi(x-y-u) \phi(u)\,du.\\
\end{eqnarray*}
Taking the Fourier transform of equation (\ref{eq:befFour}) yields that for all $k\in\pi\Z$, 
$$
(|k|^2 + ibk +1)\rh_1(k) = - \frac{4}{\lambda^2}\left(\phih(k)\right)^2\rh_1(k),
$$
where 
$$
\rh_1(k) = \frac{1}{2}\int_{-1}^1 r_1(x) e^{-ik\cdot x}\,dx.
$$
Futhermore, $\lambda \in \R_+^*$ and $\phih(0) = 0$ ($\phi$ is an odd function). Thus, since $\phih(k)$ is a purely imaginary number, $-\left(\phih(k)\right)^2 = \left|\phih(k)\right|^2$ and a solution $r_1$ necessarily 
satisfies $\rh_1(k) = 0$ for all $k\in\pi\Z$, which yields a contradiction. 
\end{example}

This example clearly shows that a naive transposition of the greedy algorithm to the non-symmetric case by analogy with the Euler equations (\ref{eq:Eulermin}) obtained in 
the symmetric case may be doomed to failure. 

\medskip

This article presents a review of some methods which aim at circumventing this difficulty. 
A particular highlight is set on the practical implementation of these methods and on the existence of theoretical rigorous convergence results. The 
properties of the different algorithms which are dealt with in this article are summarized in Figure~1. 

 \begin{figure}\label{fig:tab}
   \centering
   \input{./tabsum2.pstex_t}
   \caption{Summary of the different greedy algorithms used for non-symmetric high-dimensional linear problems. }
 \end{figure}

\section{Residual minimization algorithms}\label{sec:resmin}

In this section, we present some numerical methods used 
for the computation of separate variable representations of the solution of non-symmetric problems, for which there 
are rigorous convergence proofs. A natural idea is to symmetrize (\ref{eq:nonsym}) using a  
reformulation as a residual minimization problem in a well-chosen norm. These algorithms are also called 
\itshape Minimum Residual PGD\normalfont in the literature~\cite{Nouyprep}.

\subsection{Minimization of the residual in the $L^2(\cX \times \cT)$ norm}\label{sec:resmin1}

Let us assume that $L\in L^2(\cX \times \cT)$ and that there exists $D(A) \subset V$ a dense subdomain of $L^2(\cX\times \cT)$ such that $A(D(A)) \subset L^2(\cX\times \cT)$. 
The mapping $A: D(A) \to L^2(\cX \times \cT)$ defines a linear operator on $L^2(\cX\times \cT)$. Let us assume moreover that $A$ is a closed operator. This implies in particular that $D(A)$, endowed with the scalar product
$$
\forall v,w\in D(A), \; \langle v, w \rangle_{D(A)} = \langle v,w\rangle + \langle Av, Aw \rangle,
$$
is a Hilbert space. 

\medskip

A first approach, inspired by \cite{Falco1}, consists in applying a standard greedy algorithm on the energy functional
$$
\cE(v) = \|Av -L\|_{L^2(\cX\times \cT)}^2, \; \forall v\in D(A).
$$

\medskip 

Let us consider the case when
$$
A= \sum_{i=1}^p A_x^{(i)} \otimes A_t^{(i)}
$$
where for all $1\leq i \le p$, $A_x^{(i)}$ and $A_t^{(i)}$ are operators on $L^2(\cX)$ and $L^2(\cT)$ with domains $D\left(A_x^{(i)}\right)$ and $D\left(A_t^{(i)}\right)$ 
respectively. 
We denote by $D_x = \bigcap_{i=1}^p D\left(A_x^{(i)}\right)$ and $D_t = \bigcap_{i=1}^p D\left(A_{t}^{(i)}\right)$, and assume that 
$D_x$ and $D_t$ are dense subspaces of $L^2(\cX)$ and $L^2(\cT)$ respectively and are Hilbert spaces, when endowed with the scalar products
$$
\forall v,w\in D_x, \; \langle v,w\rangle_{D_x} = \langle v, w\rangle_{\cX} + \sum_{i=1}^p \left\langle A_x^{(i)} v, A_x^{(i)}w \right\rangle_{\cX},
$$
and
$$
\forall v,w\in D_t, \; \langle v,w\rangle_{D_t} = \langle v, w\rangle_{\cT} + \sum_{i=1}^p \left\langle A_t^{(i)} v, A_t^{(i)}w \right\rangle_{\cT}.
$$
The greedy algorithm reads: 
\begin{enumerate}
 \item let $u_0=0$ and set $n=1$;
\item define $(r_n, s_n)\in D_x  \times D_t$ such that
\begin{equation}\label{eq:minL2}
(r_n,s_n) \in \mathop{\mbox{\rm argmin}}_{(r,s)\in D_x \times D_t} \| A(u_{n-1} + r\otimes s) -L \|_{L^2(\cX\times \cT)}^2;
\end{equation}
\item set $u_n = u_{n-1} + r_n\otimes s_n$ and $n=n+1$.  
\end{enumerate}

Let us denote by $\Sigma^D := \{ r\otimes s, \; r\in D_x , \; s\in D_t\}$. From Theorem~\ref{th:sym}, provided that
\begin{itemize}
 \item[(B1)] $ \overline{\mbox{\rm Span}\Sigma^D}^{D(A)}  = D(A)$;
\item[(B2)] $\Sigma^D$ is weakly closed in $D(A)$;
\end{itemize}
the sequence $(u_n)_{n\in\N^*}$ strongly converges towards $u$ in $D(A)$. 

\medskip

In the case of problem (\ref{eq:UQ}), $A = A_x\otimes A_t$ with $A_x = -\Delta_x + b\cdot \nabla_x +1 $ and $A_t = 1$, $D(A) = \left(H^2(\cX) \cap H^1_0(\cX) \right)\otimes L^2(\cT)$, $D_x = D(A_x) = H^2(\cX)\cap H^1_0(\cX)$ and $D_t = D(A_t)= L^2(\cT)$.

For problem (\ref{eq:Poisson}), $A = A_x^{(1)} \otimes A_t^{(1)} + A_x^{(2)}\otimes A_t^{(2)}$ with 
$A_x^{(1)} =  - \Delta_x + b_x \cdot \nabla_x +1$, $A_t^{(1)} = 1$, $A_x^{(2)} = 1$ and $A_t^{(2)}  = -\Delta_t + b_t\cdot \nabla_t$, 
$D(A) = H^2(\cX\times \cT)\cap H^1_0(\cX\times \cT)$, $D_x = H^2(\cX) \cap H^1_0(\cX)$ and $D_t = H^2(\cT) \cap H^1_0(\cT)$. 

In both cases, assumptions (A1) and (A2) are satisfied.

\medskip

Actually, when $L$ is regular enough, i.e. if $L\in D(A^*)$, where $A^*$ denotes the adjoint of $A$ and $D(A^*)$ its domain, this method is equivalent to performing a standard greedy algorithm on the symmetric coercive problem
$$
A^* Au = A^* L.
$$
The Euler equations associated to the minimization problems (\ref{eq:minL2}) read
$$
\left\langle A(u_{n-1} + r_n\otimes s_n) -L, A(\delta r \otimes s_n + r_n \otimes \delta s) \right\rangle = 0, \; \forall (\delta r, \delta s)\in D_x \times D_t.
$$

\medskip

This method suffers from several drawbacks though. Firstly, the right-hand side $L$ needs more regularity than necessary for problem (\ref{eq:nonsym}) to be well-posed (we need $L\in L^2(\cX \times \cT)$ instead of $L\in V'$). 

Secondly, and more importantly, the conditioning of the associated discretized problems behaves badly since it scales quadratically with the conditioning of the original problem $Au=L$.

\subsection{Minimization of the residual in the dual norm}\label{sec:resmin2}

 In order to avoid the conditioning problems encountered when minimizing the residual
 in the $L^2(\cX\times \cT)$ norm, another method consists in performing a greedy algorithm on the energy functional
$$
\cE(v) = \|Av -L\|_{V'}^2 = \|R_V^{-1}(Av-L)\|_V^2, \; \forall v\in V.
$$
Here, the residual $Av-L$ is evaluated in the dual norm $\|\cdot\|_{V'}$. In this method, the right-hand side $L$ does not need to be more regular than $L\in V'$ and this approach
 is equivalent to performing the standard greedy algorithm on the symmetric coercive 
problem
$$
A^*(R_V)^{-1}Au = A^*(R_V)^{-1}L.
$$
The conditioning of the resulting problem scales linearly with the conditioning of the original $Au = L$ problem.

The algorithm reads: 
\begin{enumerate}
 \item let $u_0=0$ and $n=1$; 
\item let $(r_n, s_n) \in V_x \times V_t$ such that
\begin{equation}\label{eq:mindual}
(r_n,s_n) \in \mathop{\mbox{\rm argmin}}_{(r,s)\in V_x \times V_t} \| (R_V)^{-1} \left[A(u_{n-1} + r\otimes s) -L\right] \|_{V}^2;
\end{equation}
\item set $u_n = u_{n-1} + r_n\otimes s_n$ and $n=n+1$. 
\end{enumerate}

Provided that $\Sigma$ defined by (\ref{eq:defSigma2}) satisfies assumptions (A1) and (A2), we infer from Theorem~\ref{th:sym} that the sequence $(u_n)_{n\in\N}$ strongly 
converges to $u$ in $V$.  

The Euler equations associated with the minimization problems (\ref{eq:mindual}) read: for all $(\delta r, \delta s)\in V_x\times V_t$, 
$$
\left\langle R_V^{-1}\left[ A(u_{n-1} + r_n \otimes s_n) -L \right], R_V^{-1}\left[ A(\delta r \otimes s_n + r_n \otimes \delta s) \right] \right\rangle_V = 0,
$$
or equivalently,
$$
\left\langle  A(u_{n-1} + r_n \otimes s_n) -L , R_V^{-1}\left[ A(\delta r \otimes s_n + r_n \otimes \delta s) \right] \right\rangle_{V', V}= 0.
$$

\medskip

However, even if the conditioning problem of the previous method is avoided, this algorithm still requires the inversion of the operator $R_V$. 

In the case when $V = V_x\otimes V_t$, the dual space $V'$ satisfies $V' = V_x'\otimes V_t'$, so that the operator $R_V = R_{V_x} \otimes R_{V_t}$ is a tensorized operator and $R_V^{-1} = R_{V_x}^{-1} \otimes R_{V_t}^{-1}$. 
A prototypical example of this situation is given in problem (\ref{eq:UQ}), where we have $V'_x = H^{-1}(\cX)$, 
$V'_t = L^2(\cT)$, $R_{V_x}= - \Delta_x$, $R_{V_t} = 1$ and $R_V = R_{V_x} \otimes R_{V_t}$. Thus, $R_V^{-1}= (-\Delta_x)^{-1} \otimes 1$ and carrying out 
the above greedy algorithm requires the computation of several low-dimensional Poisson problems, which remains doable but increases the time and memory needs compared to a standard greedy algorithm in the symmetric coervive case where $b = b_x = 0$.

\medskip

The situation is even more intricate when $V \neq V_x \otimes V_t$, since the operator $R_V$ is not a tensorized operator in general. A prototypical example of this situation is problem (\ref{eq:Poisson}) where 
$V' = H^{-1}(\cX \times \cT)$, $R_V = -\Delta_{x,t}$ and $R_V^{-1}$ cannot be expanded as a finite sum of tensorized operators.
These intermediate symmetric coercive high-dimensional can be solved with a standard greedy algorithm presented in Section~\ref{sec:sym}, but this 
considerably increases the time needed to run a simulation.  
 
In this particular case, since $R_V = -\Delta_x \otimes 1 - 1 \otimes \Delta_t$ is the sum of two tensorized operators which commute with one another, we can use an 
approach described in~\cite{Hackbusch}. This method consists in using an \itshape approximate expansion \normalfont of the inverse of the Laplacian operator, constructed as follows.   
The function $h: x\in [x_0, +\infty) \mapsto \frac{1}{x}$
(where $x_0$ is a positive real number) can be approximated by a sum of exponential functions of the form
$$
\frac{1}{x}\approx \sum_{l=1}^N C_l e^{-c_l x},
$$
for some $N\in \N^*$ and where $(C_l)_{1\leq l \leq N}$ and $(c_l)_{1\leq l \leq N}$ a two sets of well-chosen real numbers, depending on $x_0$.
Provided that $x_0$ satisfies $x_0 < \min(1, \lambda_1^x, \lambda_1^t)$, where $\lambda_1^x$ (respectively $\lambda_1^t$) is the lowest eigenvalue of the operator $-\Delta_x$ 
on $H^1_0(\cX)$ with respect to the $L^2(\cX)$ scalar product (respectively the lowest eigenvalue of the operator $-\Delta_t$ on $H^1_0(\cT)$ with respect to the $L^2(\cT)$ 
scalar product), since both the operators $-\Delta_x \otimes 1$ and $-1\otimes \Delta_t$ commute, $R_V^{-1}$ can be approximated by
\begin{equation}\label{eq:Hack}
\begin{array}{lll}
 R_V^{-1} & \approx & \sum_{l=1}^N C_l e^{-c_l\left(-\Delta_x \otimes 1 - 1 \otimes \Delta_t\right)}\\
& = & \sum_{l=1}^N C_l e^{-c_l \Delta_x} \otimes e^{-c_l \Delta_t}.\\
\end{array}
\end{equation}
The computation of the expansion (\ref{eq:Hack}) only involves the computation of the exponential of small-dimensional operators. But of course, to have a reliable approximation of this operator, 
the number $N$ of terms in the above approximation may be very large. Besides, an explicit expansion is not always available for a general operator $R_V^{-1}$. 

\medskip

The algorithms presented in the following sections are attempts to find numerical methods which
\begin{itemize}
 \item avoid the conditioning problem inherent to the method described in Section~\ref{sec:resmin1};
\item avoid the use of inverse operators such as $R_V^{-1}$ in the approach using the dual norm. 
\end{itemize}

Of course, a natural idea would be to find a suitable norm to minimize the residual to avoid the conditioning and inversion problems. So far, no norms with such properties have been 
proposed.  

In Section~\ref{sec:dual}, 
we present algorithms already existing in the literature, namely those suggested by Anthony Nouy~\cite{NouyMinMax} and Alexe\"i Lozinski~\cite{Lozinski}. 
In Section~\ref{sec:us}, a new algorithm is proposed. The known partial convergence results for these methods are presented and the numerical implementation 
of the algorithms are detailed.

\section{Algorithms based on dual formulations}\label{sec:dual}

In this section, we present some classes of algorithms based on dual formulations 
of the non-symmetric problem (\ref{eq:nonsym}). 

\subsection{MiniMax algorithm}\label{sec:minimax}

A first algorithm based on a dual formulation of problem (\ref{eq:nonsym}) is the \itshape MiniMax algorithm \normalfont proposed by Nouy~\cite{NouyMinMax}. 

The algorithm reads as follows:
\begin{enumerate}
 \item let $u_0 = 0$ and $n=1$;
\item let  $(r_n, \rt_n, s_n, \st_n)\in V_x^2 \times V_t^2$ such that
\begin{equation}\label{eq:minmax}
(r_n, \rt_n, s_n, \st_n) \in \mbox{arg} \mathop{\mbox{max}}_{(\rt, \st)\in V_x\times V_t} \mathop{\mbox{min}}_{(r,s)\in V_x \times V_t} \cJ_n(r\otimes s, \rt\otimes \st),
\end{equation}
where for all $v, \vt \in V$, 
$$
\cJ_n(v,\vt) = \frac{1}{2}\|v\|_V^2 - a(u_{n-1} + v, \vt) + l(\vt);
$$
\item set $u_n = u_{n-1} + r_n\otimes s_n$ and $n=n+1$. 
\end{enumerate}

At each iteration $n\in\N^*$, the computation of a quadruplet $(r_n, \rt_n, s_n, \st_n)\in V_x^2\times V_t^2$ satisfying (\ref{eq:minmax}) is done by solving the 
stationarity equations
\begin{equation}\label{eq:stationarity}
 \left\{ \begin{array}{ll}
a(u_{n-1} + r_n\otimes s_n, \rt_n \otimes \delta \st + \delta \rt \otimes \st_n) = l(\rt_n \otimes \delta \st + \delta \rt \otimes \st_n), &  \quad \forall (\delta \rt, \delta \st)\in V_x \times V_t,\\
a(r_n\otimes \delta s + \delta r \otimes s_n, \rt_n \otimes \st_n) = \langle r_n\otimes \delta s + \delta r \otimes s_n, r_n\otimes s_n\rangle_V, & \quad \forall (\delta r,\delta s)\in V_x \times V_t.\\         
\end{array}
\right .
\end{equation}

In practice, for each $n\in\N^*$, these equations are solved through a fixed-point procedure where the pairs $(r_n,\rt_n)\in V_x^2$ and $(s_n,\st_n)\in V_t^2$ are computed 
iteratively. More precisely, the fixed-point algorithm reads:
\begin{itemize}
 \item set $m=0$, and choose an inital guess $\left(r_n^{(0)}, \rt_n^{(0)}, s_n^{(0)}, \st_n^{(0)}\right)\in V_x^2\times V_t^2$;
\item find $\left(r_n^{(m+1)}, \rt_n^{(m+1)}\right)\in V_x^2$ such that
$$
 \left\{ \begin{array}{ll}
 a\left(u_{n-1} + r_n^{(m+1)}\otimes s_n^{(m)}, \delta \rt \otimes \st_n^{(m)}\right) = l\left(\delta \rt \otimes \st_n^{(m)}\right), &  \quad \forall \delta \rt \in V_x,\\
a\left(\delta r \otimes s_n^{(m)}, \rt_n^{(m+1)} \otimes \st_n^{(m)}\right) = \left\langle \delta r \otimes s_n^{(m)}, r_n^{(m+1)}\otimes s_n^{(m)}\right\rangle_V, & \quad \forall \delta r \in V_x;\\         
\end{array}
\right .
$$
\item find $\left(s_n^{(m+1)}, \st_n^{(m+1)}\right)\in V_t^2$ such that
$$
 \left\{ \begin{array}{ll}
a\left(u_{n-1} + r^{(m+1)}_n\otimes s^{(m+1)}_n, \rt^{(m+1)}_n \otimes \delta \st\right) = l\left(\rt^{(m+1)}_n \otimes \delta \st\right), &  \quad \forall \delta \st\in V_t,\\
a\left(r_n^{(m+1)}\otimes \delta s, \rt^{(m+1)}_n \otimes \st^{(m+1)}_n\right) = \left\langle r^{(m+1)}_n\otimes \delta s, r^{(m+1)}_n\otimes s^{(m+1)}_n\right\rangle_V, & \quad \forall \delta s\in V_t;\\         
\end{array}
\right .
$$
\item set $m = m+1$.
\end{itemize}

In \cite{Nouyminimax}, it is proved that in the case when $a = a_x\otimes a_t$ where $a_x$ is a continuous bilinear form on $V_x\times V_x$ 
and $a_t$ a continuous bilinear form on $V_t\times V_t$ and 
$V = V_x\otimes V_t$, the algorithm converges. However, there is no convergence result in the full general case.

\subsection{Greedy algorithms for Banach spaces}\label{sec:banach}

Another family of dual greedy algorithms is inspired from the methods suggested by Temlyakov in \cite{Temlyakov} for Banach spaces
 and was proposed by Lozinski~\cite{Lozinski} in order to deal with the resolution of high-dimensional problems of the form (\ref{eq:nonsym}). 

\vspace{1cm}

\subsubsection{Greedy algorithms for general Banach spaces}

For the sake of simplicity, let us present two particular greedy algorithms proposed by Temlyakov in the context of Banach spaces, namely the \itshape X-Greedy \normalfont and the 
\itshape Dual Greedy \normalfont algorithms.

Let $(X, \|\cdot\|_X)$ be a reflexive Banach space and $\cD$ a dictionary of $X$, i.e. a subset of $X$ such that 
for all $g\in \cD$, $\|g\|_X = 1$ and $\overline{\mbox{\rm Span}(\cD)}^X = X$. Let us also denote by $X^*$ the dual space of $X$. 

Let $f\in X$. The aim of both the Dual Greedy and the X-Greedy algorithms is to give an approximation of $f$ as a linear combination of vectors of the dictionary $\cD$. 
These numerical methods are generalizations of the \itshape Pure Greedy \normalfont algorithm, which is defined for Hilbert spaces. When $X$ is a Hilbert 
space endowed with the scalar product $\langle \cdot, \cdot \rangle_X$, the Pure Greedy algorithm can be interpreted in two equivalent ways, namely: 

\medskip

\bfseries Pure Greedy algorithm (1): \normalfont
\begin{enumerate}
 \item let $f_0 = 0$, $r_0 = f$ and $n=1$;
\item let $g_n\in \cD$ and $\alpha_n\in\R$ such that (assuming existence)
$$
\|r_{n-1} - \alpha_n g_n\|_X = \mathop{\mbox{min}}_{g\in \cD, \; \alpha \in \R} \|r_{n-1} - \alpha g\|_X;
$$
\item let $f_n = f_{n-1} + \alpha_n g_n$, $r_n = r_{n-1} - \alpha_n g_n$ and $n=n+1$; 
\end{enumerate}

\medskip

and

\medskip 

\bfseries Pure Greedy algorithm (2): \normalfont
\begin{enumerate}
 \item let $f_0 = 0$, $r_0 = f$ and $n=1$;
\item let $g_n\in \cD$ such that (assuming existence)
$$
\langle r_{n-1}, g_n\rangle_X = \mathop{\mbox{max}}_{g\in \cD} \langle r_{n-1}, g \rangle_X;
$$
\item let $\alpha_n \in  \R$ such that
$$
\|r_{n-1} - \alpha_n g_n \|_X = \mathop{\mbox{min}}_{\alpha\in\R} \|r_{n-1} - \alpha g_n\|_X;
$$
\item let $f_n = f_{n-1} + \alpha_n g_n$, $r_n = r_{n-1} - \alpha_n g_n$ and $n=n+1$. 
\end{enumerate}

\medskip

When $X$ is a Hilbert space, the two versions of the Pure Greedy algorithm are equivalent, but this is not the case anymore as soon as $X$ is a general Banach space. 

\medskip

The \itshape X-Greedy \normalfont algorithm corresponds to the extension of the first version of the Pure Greedy algorithm:
\begin{enumerate}
 \item let $f_0 = 0$, $r_0 = f$ and $n=1$;
\item let $g_n\in \cD$ and $\alpha_n \in \R$ such that (assuming existence)
\begin{equation}\label{eq:XGreedy}
\|r_{n-1} - \alpha_n g_n\|_X = \mathop{\mbox{min}}_{g\in \cD, \; \alpha \in \R} \|r_{n-1} - \alpha g\|_X;
\end{equation}
\item let $f_n = f_{n-1} + \alpha_n g_n$, $r_n = r_{n-1} - \alpha_n g_n$ and $n=n+1$. 
\end{enumerate}

\medskip

The \itshape Dual Greedy \normalfont algorithm generalizes the second version of the Pure Greedy algorithm and is slightly more subtle. It is based on the notion of \itshape peak functional\normalfont. For any non-zero element $f\in X$, 
we say that $F_f\in X'$ is a peak functional for $f$ if $\|F_f\|_{X^*} = 1$ and $F_f(f) = \|f\|_X$. The \itshape Dual Greedy \normalfont algorithm reads: 
\begin{enumerate}
 \item let $f_0 = 0$, $r_0 = f$ and $n=1$;
\item let $F_{r_{n-1}}\in X^*$ be a peak functional for $r_{n-1}$ and let $g_n\in \cD$ such that (assuming existence)
\begin{equation}\label{eq:DualGreedy1}
g_n \in \mathop{\mbox{\rm argmax}}_{g\in \cD} F_{r_{n-1}}(g);
\end{equation}
\item let $\alpha_n \in \R$ such that
\begin{equation}\label{eq:DualGreedy2}
\alpha_n \in \mathop{\rm argmin}_{\alpha\in \R} \|r_{n-1} - \alpha g_n\|_X;
\end{equation}
\item let $f_n = f_{n-1} + \alpha_n g_n$, $r_n = r_{n-1} - \alpha_n g_n$ and $n=n+1$. 
\end{enumerate}

\medskip

Slightly modified versions (relaxed versions) of the X-Greedy and Dual Greedy algorithms are proved to converge in \cite{Temlyakov} provided that the space $X$ and 
the dictionary $\cD$ satisfy some additional assumptions, detailed below. 

\medskip

We define the modulus of smoothness of the Banach space $X$ by
$$
\forall \beta \in \R, \; \rho(\beta) := \mathop{\sup}_{\|x\|_X = \|y\|_X = 1} \left( \frac{1}{2}(\|x+\beta y\|_X + \|x-\beta y\|_X) -1 \right).
$$
The Banach space $(X, \|\cdot\|_X)$ is said to be uniformly smooth~\cite{Temlyakov} if 
$$
\mathop{\lim}_{\beta\to 0} \frac{\rho(\beta)}{\beta} = 0.
$$
Let us point out that if a Banach space $(X, \|\cdot\|_X)$ is uniformly smooth, then the mapping 
$G: x \in X\mapsto \|x\|_X$ is Fr\'echet-differentiable. 

\medskip

The relaxed versions of the X-Greedy and Dual Greedy algorithms are proved to converge~\cite{Temlyakov} provided that
\begin{itemize}
\item[(B1)] $\overline{\mbox{\rm Span}(\cD)}^{\|\cdot\|_X} = X$; 
\item[(B2)] $\R \cD$ is weakly closed in $X$;
\item[(B3)] $X$ is a uniformly smooth Banach space.
\end{itemize}
We do not write here these relaxed versions of the algorithms for the sake of brevity and refer to~\cite{Temlyakov}. 

\vspace{1cm}

\subsubsection{Special Banach spaces for non-symmetric high-dimensional problems}

Let us now present how these ideas were adapted by Lozinski to the case of high-dimensional non-symmetric problems. We begin here with the description of the particular 
Banach spaces involved. Let us assume in the rest of Section~\ref{sec:banach} that 
the operator $\cA^{-1}:V \to V$ is bounded.

\vspace{1cm}

\bfseries A Banach space with good theoretical properties but which cannot be used in practice \normalfont

\medskip

The space $V$ is now endowed with the following dual norm
$$
\forall v\in V, \; \|v\|_\cA = \sup_{w\in V, \; w\neq 0} \frac{a(v,w)}{\|w\|_V} = \|\cA v \|_V = \|A v\|_{V'}.
$$
Actually, since the linear operator $\cA$ is bounded on $V$, the space $(V, \|\cdot\|_\cA)$ is a reflexive Banach space whose dual space is $(V, \|\cdot \|_{(\cA^*)^{-1}})$ where 
$$
\forall v\in V, \; \|v\|_{(\cA^*)^{-1}} = \sup_{w\in V, \; w\neq 0} \frac{a(w,v)}{\|w\|_V} = \|(\cA^*)^{-1}v \|_V.
$$
Let us show that the Banach space $(V, \|\cdot\|_{\cA})$ and the dictionary
$$
\cD = \{ r\otimes s, \; r\in V_x, \; s\in V_t, \; \|r\otimes s \|_\cA = 1 \}
$$
satisfy assumptions (B1), (B2) and (B3). 

\medskip

Let us begin with the proof of (B1) and (B2). Since the set of tensor product functions
$$
\Sigma = \{ r\otimes s, \; r\in V_x, \; s\in V_t \} = \R \cD
$$
is assumed to be weakly closed in $(V, \|\cdot\|_V)$ and to satisfy $\overline{\mbox{\rm Span}(\Sigma)}^{(V, \|\cdot\|_V)} = V$ (assumptions (A1) and (A2)), (B1) and (B2) are direct consequences 
of the fact that $\cA$ and $\cA^{-1}$ belong to the space $\mathfrak{L}(V,V)$ (i.e. are bounded operators). For instance, $\mathfrak{L}\left( (V, \|\cdot\|_V), \R\right) = \mathfrak{L}\left( (V, \|\cdot\|_\cA), \R\right)$ since
$$
\forall l\in  \mathfrak{L}\left( (V, \|\cdot\|_V), \R\right),  \frac{1}{\|\cA \|_{\mathfrak{L}(V,V)}} \|l\|_{\mathfrak{L}\left( (V, \|\cdot\|_V), \R\right)} \leq\|l\|_{\mathfrak{L}\left( (V, \|\cdot\|_\cA), \R\right)} \leq \|\cA^{-1} \|_{\mathfrak{L}(V,V)} \|l\|_{\mathfrak{L}\left( (V, \|\cdot\|_V), \R\right)}.
$$

Let us now prove (B3). Since the operator $\cA$ is invertible, the modulus $\rho_\cA$ of smoothness of $(V, \|\cdot\|_\cA)$ is equal to the modulus of smoothness $\rho$ of $(V, \|\cdot\|_V)$. Indeed, for all $\beta\in\R$, 
\begin{align*}
 \rho_\cA(\beta) & = \mathop{\sup}_{v,w\in V, \; \|v\|_{\cA} = \|w\|_{\cA} = 1} \left( \frac{1}{2}(\| v + \beta w \|_{\cA} + \|v -\beta w \|_\cA) -1 \right)\\
& = \mathop{\sup}_{v,w\in V, \; \|\cA v\|_{V} = \|\cA w\|_{V} = 1} \left( \frac{1}{2}(\| \cA v + \beta \cA w \|_{V} + \|\cA v -\beta \cA w \|_V) -1 \right)\\
& = \mathop{\sup}_{v,w\in V, \; \|v\|_{V} = \| w\|_{V} = 1} \left( \frac{1}{2}(\| v + \beta w \|_{V} + \| v - \beta w \|_V) -1 \right)\\
& = \rho(\beta).\\
\end{align*}
Since $(V, \|\cdot\|_V)$ is a Hilbert space, 
$$
\frac{\rho_{\cA}(\beta)}{\beta} = \frac{\rho(\beta)}{\beta} \mathop{\longrightarrow}_{\beta\to 0} 0, 
$$
and $(V, \|\cdot\|_{\cA})$ is a uniformly smooth Banach space. 

\medskip

To implement the X-Greedy or Dual Greedy algorithms in practice in this context, one needs to compute the norm $\|\cdot\|_\cA$ (see (\ref{eq:XGreedy}) and (\ref{eq:DualGreedy2})). 
Since for all $v\in V$, $\|v\|_\cA = \|\cA v\|_V = \|R_V^{-1} A v\|_V$, this requires the resolution of several intermediate low- or high-dimensional problems to compute the inverse 
of the operator $R_V$. The same issues as those described in Section~\ref{sec:resmin2} have to be faced.  

\vspace{1cm}

\bfseries Another more practical Banach space \normalfont

\medskip

The idea of Lozinski is to replace this norm 
by a weaker one, easier to compute,
\begin{equation}\label{eq:defiA}
\forall v\in V, \; \|v\|_{i\cA} = \sup_{(r,s)\in V_x \times V_t, \; r\otimes s\neq 0} \frac{a(v,r\otimes s)}{\|r\otimes s\|_V}. 
\end{equation}
Actually, denoting by $\|\cdot\|_i$ the \itshape injective norm \normalfont on $V$ \cite{tensorbook}, defined by
$$
\forall v\in V, \; \|v\|_i = \sup_{(r,s)\in V_x \times V_t, \; r\otimes s\neq 0} \frac{\langle v,r\otimes s\rangle_V}{\|r\otimes s\|_V}, 
$$
it holds that for all $v\in V$, $\|v\|_{i\cA} = \| \cA v \|_i$. 
Reasoning as above, the Banach space $(V,\|\cdot\|_{i\cA})$ has exactly the same properties as $(V,\|\cdot\|_i)$. 

 \medskip

Since $\Sigma$ is weakly closed in $V$, and since for all $v\in V$, $\|v\|_i \leq \|v\|_V$, $\Sigma$ is also weakly closed in $(V, \|\cdot\|_i)$. But, 
in the full general case, the Banach space $(V,\|\cdot\|_i)$ and hence the Banach space $(V,\|\cdot\|_{i\cA})$ are not uniformly smooth. 
Actually, these spaces may not even be reflexive. 
Indeed, let us assume that $V = V_x\otimes V_t$ and that $\|\cdot \|_V$ is the associated cross-norm, in other words that for all $(r,s)\in V_x\times V_t$, $\|r\otimes s\|_V = \|r\|_{V_x} \|s\|_{V_t}$.  It holds that 
\cite{tensorbook} $(V,\|\cdot\|_i)$ is isomorphous to $\cK(V_x, V_t)$, the Banach space of compact operators from $V_x$ to $V_t$ endowed with the operator norm. Since $\cK(V_x,V_t)$ is not a reflexive space ($\cK(V_x, V_t)^* = \mathfrak{S}_1(V_x,V_t)$ and $\mathfrak{S}_1(V_x, V_t)^* = \mathfrak{L}(V_x, V_t)$ where $\mathfrak{S}_1(V_x,V_t)$ denotes the 
set of trace-class operators from $V_x$ to $V_t$), there is no guarantee of convergence of the relaxed versions of the X-Greedy or Dual Greedy algorithms presented above. 

\vspace{1cm}

\bfseries The finite-dimensional cross-norm case \normalfont

\medskip
However, in the case when $V_x$ and $V_t$ are finite-dimensional and $V = V_x \otimes V_t$, the spaces $\cK(V_x,V_t)$ and $\mathfrak{L}(V_x,V_t)$ are identical. 
The space $(V,\|\cdot\|_i)$ is then reflexive and uniformly smooth. 
Indeed, if $V_x = \R^{m_x}$ and $V_t = \R^{m_t}$, since $\|\cdot\|_{V_x}$ and $\|\cdot\|_{V_t}$ both derive from the scalar products $\langle \cdot, \cdot \rangle_{V_x}$ 
and $\langle \cdot, \cdot \rangle_{V_t}$, there exist two invertible matrices $P\in \R^{m_x\times m_x}$ and $Q\in \R^{m_t\times m_t}$ such that
$$
\begin{array}{l}
 \forall r\in \R^{m_x}, \; \|r\|_{V_x} = \|Pr\|_{F_{m_x}},\\
\forall s\in \R^{m_t}, \; \|s\|_{V_t}  = \|Qs\|_{F_{m_t}},\\
\end{array}
$$
where $\|\cdot\|_{F_{m_x}}$ and $\|\cdot\|_{F_{m_t}}$ denote respectively the Frobenius norms on $\R^{m_x}$ and $\R^{m_t}$. 
Thus, $(V, \|\cdot\|_i)$ is isometrically isomorphic to 
$\R^{m_x\times m_t}$ seen as $\cK(V_x, V_t)$, endowed with the norm
$$
\forall M\in \R^{m_x\times m_t}, \; \|M\|_i = \mathop{\sup}_{r\in \R^{m_x}, \; r\neq 0} \frac{\|QMr\|_{F_{m_t}}}{\|Pr\|_{F_{m_x}}} = \|QMP^{-1}\|_2,
$$
where 
$$
\forall M\in \R^{m_x\times m_t}, \; \|M\|_2 = \mathop{\sup}_{r\in \R^{m_x}, \; r\neq 0} \frac{\|Mr\|_{F_{m_t}}}{\|r\|_{F_{m_x}}}.
$$
Actually, $(\R^{m_x\times m_t}, \|\cdot\|_2)$ is a uniformly smooth Banach space \cite{refsmooth}. Thus, greedy algorithms for Banach spaces do converge in this setting. 


\vspace{1cm}

\subsubsection{Practical implementation of the algorithms}

 \medskip

\bfseries X-Greedy algorithm\normalfont

\medskip

The X-Greedy algorithm reads:
\begin{enumerate}
 \item let $u_0 = 0$ and $n=1$; 
\item find $(r_n,s_n)\in V_x \times V_t$ such that
\begin{equation}
(r_n,s_n) \in \mathop{\mbox{argmin}}_{(r,s)\in V_x\times V_t} \| u - u_{n-1} -r\otimes s\|_{i\cA};
\end{equation}
\item set $u_n = u_{n-1} + r_n\otimes s_n$ and $n=n+1$. 
\end{enumerate}
From the definition of the norm $\|\cdot\|_{i\cA}$ (see (\ref{eq:defiA})), the second step of the algorithm can be rewritten as
\begin{itemize}
 \item [(2)] find $(r_n,s_n)\in V_x \times V_t$ such that
\begin{equation}\label{eq:Xgreedy}
\begin{array}{lll}
(r_n,s_n) & \in & \mathop{\mbox{argmin}}_{(r,s)\in V_x\times V_t} \mathop{\sup}_{(\rt, \st) \in V_x \times V_t} \frac{l(\rt\otimes \st) - a(u_{n-1}+r\otimes s, \rt\otimes \st)}{\|\rt \otimes \st\|_V} \\
 & = & \mathop{\mbox{argmin}}_{(r,s)\in V_x\times V_t} \mathop{\sup}_{(\rt, \st) \in V_x \times V_t, \; \|\rt\otimes \st\|_V = 1} l(\rt\otimes \st) - a(u_{n-1}+r\otimes s, \rt\otimes \st).\\
\end{array}
\end{equation}
\end{itemize}

\medskip

From a practical point of view, at each iteration $n\in\N^*$, the functions $(r_n, \widetilde{r}_n,s_n, \st_n) \in V_x^2 \times V_t^2$ are obtained by solving the stationarity equations associated with (\ref{eq:Xgreedy}), namely by solving the following coupled problem
$$
\left\{
\begin{array}{l}
 \mbox{find }(r_n, \rt_n,s_n, \st_n) \in V_x^2 \times V_t^2 \mbox{ such that for all }(\delta r, \delta \rt, \delta s, \delta \st)\in V_x^2 \times V_t^2, \\
\langle \rt_n \otimes \st_n, \rt_n \otimes \delta \st+ \delta \rt \otimes \st_n  \rangle_V + a(u_{n-1} + r_n\otimes s_n, \rt_n \otimes \delta \st+ \delta \rt \otimes \st_n ) = l(\rt_n \otimes \delta \st+ \delta \rt \otimes \st_n ), \\
a(r_n \otimes \delta s + \delta r \otimes s_n , \rt_n \otimes \st_n) = 0.\\
\end{array}
\right .
$$

The X-Greedy algorithm has not been implemented in practice yet. Indeed, it is not clear how to compute a solution of the above stationarity equations since using a fixed-point algorithm 
procedure similar to the one presented in Section~\ref{sec:minimax} for the MiniMax algorithm would always lead to $\rt_n \otimes \st_n = 0$, due to the form of the second equation.  

\vspace{1 cm}

\bfseries Dual Greedy algorithm \normalfont

\medskip

Let us describe here how Lozinski adapted the Dual Greedy algorithm for the resolution of high-dimensional non-symmetric linear problems. 

A remaining issue concerns the construction of a peak functional $F_{r_{n-1}}$ for the residual $r_{n-1} = u -u_{n-1}$ which is used in the second step of the algorithm (\ref{eq:DualGreedy1}). 
Actually, the true peak functional is not computed but only an approximation of this functional by an optimal tensor product function in a sense which is made precise below. 

The adapted Dual Greedy algorithm reads: 
\begin{enumerate}
 \item set $u_0 = 0$ and $n=1$; 
\item (computation of an approximate peak functional for the residual $r_{n-1} = u - u_{n-1}$ with a tensor product function) find $(\rt_n, \st_n)\in V_x\times V_t$ such that
\begin{equation}\label{eq:step1dg}
(\rt_n, \st_n)\in \mathop{\mbox{argmax}}_{(\rt, \st)\in V_x\times V_t, \; \|\rt\otimes \st\|_V = 1} a(u-u_{n-1}, \rt\otimes \st) = \mathop{\mbox{argmax}}_{(\rt, \st)\in V_x\times V_t, \; \|\rt\otimes \st\|_V = 1} l(\rt\otimes \st) - a(u_{n-1}, \rt\otimes \st);
\end{equation}
\item (Step~2 of the Dual Greedy algorithm, see (\ref{eq:DualGreedy1})) find $(r_n,s_n)\in V_x\times V_t$ such that
\begin{equation}\label{eq:step2dg}
(r_n, s_n)\in \mathop{\mbox{argmax}}_{(r,s)\in V_x\times V_t, \; \|r\otimes s\|_{i\cA} = 1} a(r\otimes s, \rt_n\otimes \st_n);
\end{equation}
\item (Step~3 of the Dual Greedy algorithm, see (\ref{eq:DualGreedy2})) find $\alpha_n \in \R$ such that
\begin{equation}\label{eq:step3dg}
\alpha_n \in \mathop{\mbox{argmin}}_{\alpha \in \R} \|u - u_{n-1} - \alpha r_n\otimes s_n\|_{i\cA} = \mathop{\mbox{argmin}}_{\alpha \in \R} \mathop{\sup}_{(\rt,\st)\in V_x\times V_t, \; \|\rt\otimes \st\|_V = 1} l(\rt\otimes \st) - a(u_{n-1} + \alpha r_n \otimes s_n, \rt\otimes \st);
\end{equation}
\item set $u_n = u_{n-1} + \alpha_n r_n\otimes s_n$ and $n = n+1$.
\end{enumerate}
 
\medskip

The Euler-Lagrange equations associated with the minimization problem (\ref{eq:step1dg}) read: for all $(\delta \rt, \delta \st)\in V_x \times V_t$, 
$$
\lambda_n \langle \rt_n\otimes \st_n, \rt_n\otimes \delta \st + \delta \rt \otimes \st_n \rangle_V = l(\rt_n\otimes \delta \st + \delta \rt \otimes \st_n) -a(u_{n-1}, \rt_n\otimes \delta \st + \delta \rt \otimes \st_n),
$$
for some $\lambda_n \in \R$ satisfying $\lambda_n \|\rt_n\otimes \st_n\|_V^2 = \lambda_n  = l(\rt_n\otimes \st_n) - a(u_{n-1}, \rt_n\otimes \st_n)$. 

\medskip

The Euler-Lagrange equations associated to the minimization problem (\ref{eq:step2dg}) can be rewritten as follows: $(r_n,s_n)\in V_x\times V_t$ is solution of
$$
a(r_n\otimes \delta s + \delta r \otimes s_n, \rt_n \otimes \st_n) = \mu_n a(r_n\otimes \delta s + \delta r \otimes s_n, \rh_n\otimes \sh_n), \; \forall (\delta r, \delta s)\in V_x \times V_t,
$$
where $(\rh_n, \sh_n)\in V_x\times V_t$ is such that
\begin{equation}\label{eq:rhsh}
(\rh_n, \sh_n) \in \mathop{\mbox{argmax}}_{(\rh, \sh)\in V_x\times V_t, \; \|\rh\otimes \sh\|_V = 1} a(r_n\otimes s_n, \rh\otimes \sh),
\end{equation}
and $\mu_n = a(r_n\otimes s_n, \rt_n \otimes \st_n)$. Besides, if the pair $(\rh_n, \sh_n)$ satisfies (\ref{eq:rhsh}), it holds that
$$
a(r_n\otimes s_n, \rh_n\otimes \delta \sh + \delta \rh \otimes \sh_n) = \nu_n \langle \rh_n\otimes \sh_n, \rh_n \otimes \delta \sh + \delta \rh\otimes \sh_n\rangle_V, \; \forall (\delta \rh, \delta \sh)\in V_x\times V_t,
$$
with $\nu_n = a(r_n\otimes s_n, \rh_n\otimes \sh_n) = \|r_n\otimes s_n\|_{i\cA} = 1$. 
This yields to a coupled problem on $(r_n,s_n)$ and $(\rh_n, \sh_n)$. 
Lozinski the noticed that, if $(r_n,s_n)$ is solution to (\ref{eq:step2dg}), $(\rh_n, \sh_n) = (\rt_n,\st_n)$ is solution of (\ref{eq:rhsh}) in the sense that 
$$
(\rt_n, \st_n) \in \mathop{\mbox{argmax}}_{(\rh, \sh)\in V_x\times V_t, \; \|\rh\otimes \sh\|_V = 1} a(r_n\otimes s_n, \rh\otimes \sh).
$$
The Euler-Lagrange equations can be rewritten as
$$
a(r_n\otimes s_n, \rt_n\otimes \delta \st + \delta \rt\otimes \st_n) = \langle \rt_n \otimes \st_n, \rt_n \otimes \delta \st + \delta \rt \otimes \st_n \rangle_V, \; \forall (\delta \rt, \delta \st)\in V_x\times V_t,
$$
by taking $\mu_n = a(r_n\otimes s_n, \rt_n\otimes \st_n) = \|r_n\otimes s_n\|_{i\cA} = 1$. 

\medskip

The Euler equations associated with (\ref{eq:step3dg}) read
$$
a(u-u_{n-1} - \alpha_n r_n\otimes s_n, \rt_n\otimes \st_n) = 0,
$$
yielding to $\alpha_n = a(u -u_{n-1},\rt_n\otimes \st_n) = \lambda_n$.

\medskip

Finally, replacing $\alpha_n r_n\otimes s_n$ by $r_n\otimes s_n$, the iterations of the Dual Greedy algorithm are computed in practice as follows:

\begin{enumerate}
 \item let $u_0 = 0$ and $n=1$;
\item find $(r_n, \rt_n , s_n , \st_n)\in V_x^2 \times V_t^2$ such that for all $(\delta r, \delta \rt, \delta s , \delta \st) \in V_x^2 \times V_t^2$,
$$
\left\{
\begin{array}{l}
 \langle \rt_n \otimes \st_n,  \rt_n \otimes \delta \st + \delta \rt \otimes \st_n\rangle_V = l(\rt_n \otimes \delta \st + \delta \rt \otimes \st_n) - a(u_{n-1} , \rt_n \otimes \delta \st + \delta \rt \otimes \st_n),\\
a(r_n\otimes s_n, \rt_n \otimes \delta \st + \delta \rt \otimes \st_n) = l(\rt_n \otimes \delta \st + \delta \rt \otimes \st_n) - a(u_{n-1} , \rt_n \otimes \delta \st + \delta \rt \otimes \st_n);\\
\end{array}
\right .
$$
\item set $u_n = u_{n-1} + r_n\otimes s_n$ and $n=n+1$. 
\end{enumerate}
These equations lead to two decoupled problems on $(r_n,s_n)$ and 
$(\rt_n, \st_n)$. Each of them is solved through a fixed-point procedure similar to (\ref{eq:fp}) described in details in Section~\ref{sec:fpalgos}. 
Some numerical tests are presented in Section~\ref{sec:num}, which illustrate the convergence of this algorithm.

\section{The Decomposition algorithm}\label{sec:us}

Let us now present a new algorithm based on a \itshape decomposition \normalfont of the bilinear form $a$.
The bilinear form $a$ can always be written as 
\begin{equation}\label{eq:decomposition}
 a(\cdot, \cdot) = b_s(\cdot, \cdot) + b(\cdot, \cdot)
\end{equation}
where $b_s(\cdot, \cdot)$ is a symmetric coercive continuous bilinear form on 
$V\times V$ and $b(\cdot, \cdot)$ a (not necessarily symmetric) continuous bilinear form on $V\times V$. In the sequel, $b_s(\cdot, \cdot)$ 
(respectively $b(\cdot, \cdot)$) will be refered to as 
the \itshape implicit \normalfont (respectively \itshape explicit\normalfont) 
part of $a(\cdot, \cdot)$. This terminology will be explained below. 

\begin{example}
 Let us introduce $a_s$ (respectively $a_{as}$) the symmetric part (respectively antisymmetric part) of $a(\cdot, \cdot)$, defined by:
\begin{equation}\label{eq:sympartdef}
\forall v,w\in V, \; a_s(v,w) = \frac{1}{2}\left( a(v,w) + a(w,v) \right),
\end{equation}
and
\begin{equation}\label{eq:antisympartdef}
\forall v,w\in V, \; a_{as}(v,w)  = \frac{1}{2}\left( a(v,w) - a(w,v) \right).
\end{equation}
Provided that $a_s$ is coercive, the decomposition $a(\cdot , \cdot) = a_s(\cdot, \cdot) + a_{as}(\cdot, \cdot)$ is admissible in the sense of (\ref{eq:decomposition}). 
\end{example}

\medskip

Let us denote by $\cB_s$ and $\cB$ the bounded operators on $V$ defined by
$$
\begin{array}{l}
\forall v,w\in V, \; b_s(v,w) = \langle \cB_s v,w\rangle_V ,\\
\forall v,w\in V, \; b(v,w) = \langle \cB v,w\rangle_V.\\
\end{array}
$$
Since $b_s$ is coercive, the operator $\cB_s$ is invertible. 

\medskip
  
The principle of the algorithm we consider in this section to solve problem (\ref{eq:nonsym}) consists in \itshape expliciting \normalfont the part $b$ of the bilinear form as a right-hand side source term. 
More precisely, one can consider the following fixed-point algorithm:
\begin{enumerate}
 \item choose a starting guess $u_0\in V$ and set $n=1$;
\item let $u_n$ be the unique solution to
\begin{equation}\label{eq:fixpoint}
 \forall v\in V, \; b_s(u_n,v) = l(v) - b(u_{n-1},v);
\end{equation}
\item set $n=n+1$. 
\end{enumerate}
In other words, $u_n = F(u_{n-1})$ where for all $v\in V$, $F(v) = \cB_s^{-1}(\cL - \cB v)$. If $\|\cB_s^{-1}\cB\|_{\mathfrak{L}(V,V)} < 1$, the mapping $F:V\to V$ is a contraction  
and it follows from the Picard fixed-point theorem that the sequence $(u_n)_{n\in\N}$ strongly converges in $V$ towards the solution $u$ of the initial problem~(\ref{eq:nonsym}). 

A natural approach thus consists in solving problem (\ref{eq:fixpoint}) at each iteration $n\in\N^*$ using a standard greedy procedure. Provided that the greedy expansion obtained at each iteration 
$n\in\N^*$ is accurate enough, the sequence $(u_n)_{n\in\N}$ given by this algorithm strongly converges in $V$ towards $u$.

\medskip

However, the principle of this method requires to compute a full greedy loop at each iteration $n\in\N^*$. In order to save computational time, we now introduce the following algorithm, 
in which only one tensor 
product function is computed at each iteration 
$n\in\N^*$.

\medskip

\bfseries Decomposition algorithm: \normalfont

\begin{enumerate}
 \item let $u_0 = 0$ and $n=1$;
\item find $(r_n, s_n)\in V_x \times V_t$ such that
\begin{equation}\label{eq:explicitalg}
(r_n,s_n) \in \mathop{\mbox{\rm argmin}}_{(r,s)\in V_x \times V_t} \frac{1}{2}b_s(u_{n-1} + r\otimes s, u_{n-1} + r\otimes s) - l(r\otimes s) -b(u_{n-1}, r\otimes s);  
\end{equation}
\item set $u_n = u_{n-1} + r_n \otimes s_n$ and set $n=n+1$.
\end{enumerate}
The bilinear form $b(\cdot, \cdot)$ is \itshape explicited \normalfont as a right-hand side, whereas $b_s(\cdot, \cdot)$ remains \itshape implicit\normalfont. This justifies the 
terminology introduced in the beginning of the section. 

\medskip
Equation (\ref{eq:explicitalg}) can be rewritten equivalently as
$$
\begin{array}{lll}
(r_n,s_n) & \in  & \mathop{\mbox{\rm argmin}}_{(r,s)\in V_x \times V_t} \frac{1}{2}b_s(r\otimes s, r\otimes s) - l(r\otimes s) -b_s(u_{n-1} + r\otimes s) -b(u_{n-1}, r\otimes s)\\
& = & \mathop{\mbox{\rm argmin}}_{(r,s)\in V_x \times V_t} \frac{1}{2}b_s(r\otimes s, r\otimes s) - l(r\otimes s)-a(u_{n-1}, r\otimes s).  \\
\end{array}
$$
This means that for each $n\in\N^*$, $r_n\otimes s_n$ is a tensor product solution to the first iteration of the greedy algorithm applied to 
the following symmetric coercive problem
$$
\left\{
\begin{array}{l}
 \mbox{find }u\in V\mbox{ such that}\\
\forall v\in V, \; b_s(u,v) = l(v) -b_s(u_{n-1},v) - b(u_{n-1}, v) = l(v) - a(u_{n-1}, v).\\
\end{array}
\right .
$$

\medskip

As explained above, such an algorithm is expected to converge only if the norm $\|\cB_s^{-1}\cB\|_{\mathfrak{L}(V,V)}$ is small enough.
Actually, in the case when the spaces $V_x$ and $V_t$ are finite dimensional, the following result holds: 
\begin{prop}\label{prop:explicit}
 If $V_x$ and $V_t$ are finite dimensional, there exists $\kappa >0$ such that if 
\begin{equation}\label{eq:rate}
\|\cB_s^{-1}\cB\|_{\mathfrak{L}(V,V)} \leq \kappa,
\end{equation}
then the sequence $(u_n)_{n\in\N^*}$ defined by the Decomposition algorithm strongly converges to $u$ in $V$. 
\end{prop}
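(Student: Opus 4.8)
The plan is to monitor the error $e_n := u_n - u$ in the norm $\|v\|_{b_s}:=\sqrt{b_s(v,v)}$ (which is equivalent to $\|\cdot\|_V$ since $b_s$ is continuous and coercive) and to show that it contracts geometrically once $\|\cB_s^{-1}\cB\|_{\mathfrak{L}(V,V)}$ is small. Set $T:=\cB_s^{-1}\cB$ and $F(v):=\cB_s^{-1}(\cL-\cB v)$, so that $F$ is affine with $F(v)-F(w)=-T(v-w)$ and $F(u)=u$. Using $a=b_s+b$, one checks that the element $\delta_n^*\in V$ solving $b_s(\delta_n^*,v)=l(v)-a(u_{n-1},v)$ for all $v\in V$ satisfies $\delta_n^*=F(u_{n-1})-u_{n-1}=-(I+T)e_{n-1}$. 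As recalled just before the statement, the increment $r_n\otimes s_n$ produced by the Decomposition algorithm is the first greedy iterate for the symmetric coercive problem $b_s(\cdot,v)=l(v)-a(u_{n-1},v)$, hence (by (\ref{eq:2ndform}) applied with the form $b_s$) a best rank-one $b_s$-approximation of $\delta_n^*$; existence is guaranteed by Theorem~\ref{th:sym} under (A1)--(A2). Denote it $\Pi(\delta_n^*)$ and put $\rho_n:=\delta_n^*-\Pi(\delta_n^*)$. Then $e_n=e_{n-1}+\Pi(\delta_n^*)=e_{n-1}+\delta_n^*-\rho_n=-Te_{n-1}-\rho_n$.

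Next I would estimate $\|\rho_n\|_{b_s}$. Equation (\ref{eq:normeq}), applied with the bilinear form $b_s$, gives $b_s(\delta_n^*-\Pi(\delta_n^*),\Pi(\delta_n^*))=0$, whence the Pythagorean identity $\|\rho_n\|_{b_s}^2=\|\delta_n^*\|_{b_s}^2-\|\Pi(\delta_n^*)\|_{b_s}^2$. It therefore suffices to exhibit a constant $c\in(0,1]$, depending only on $b_s$, $V_x$, $V_t$, such that $\|\Pi(\delta)\|_{b_s}\ge c\,\|\delta\|_{b_s}$ for every $\delta\in V$; this gives $\|\rho_n\|_{b_s}\le\sqrt{1-c^2}\,\|\delta_n^*\|_{b_s}$.

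This lower bound -- the only place where finite-dimensionality is used -- follows from a compactness argument. By Lemma~\ref{lem:maxminform} (equation (\ref{eq:eqmaxmin}) with $n=1$, $u_0=0$ and the form $b_s$), $\|\Pi(\delta)\|_{b_s}=\sup_{r\otimes s\neq 0}b_s(\delta,r\otimes s)/\|r\otimes s\|_{b_s}$, so one only has to bound below the map $\Phi:\delta\mapsto\sup_{\|r\otimes s\|_{b_s}=1}b_s(\delta,r\otimes s)$ on the $b_s$-unit sphere $S$ of $V$. The map $\Phi$ is $1$-Lipschitz (Cauchy--Schwarz for $b_s$), hence continuous; $S$ is compact because $\dim V<\infty$; and $\Phi(\delta)>0$ for $\delta\neq 0$, since $\Phi(\delta)=0$ would force $b_s(\delta,\cdot)$ to vanish on $\Sigma$, hence on $\mbox{\rm Span}(\Sigma)=V$ by (A1), hence $\delta=0$ by coercivity of $b_s$. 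Thus $\Phi$ attains a positive minimum $c$ on $S$, and $c\le 1$ since $\Phi(\delta)\le\|\delta\|_{b_s}$.

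It remains to assemble the estimate. From $e_n=-Te_{n-1}-\rho_n$, the bound $\|\rho_n\|_{b_s}\le\sqrt{1-c^2}\,\|\delta_n^*\|_{b_s}$ and $\delta_n^*=-(I+T)e_{n-1}$ one obtains $\|e_n\|_{b_s}\le\left(\|T\|_{b_s}+\sqrt{1-c^2}\,(1+\|T\|_{b_s})\right)\|e_{n-1}\|_{b_s}$, where $\|T\|_{b_s}$ is the operator norm of $T$ on $(V,\|\cdot\|_{b_s})$, which satisfies $\|T\|_{b_s}\le C\,\|\cB_s^{-1}\cB\|_{\mathfrak{L}(V,V)}$ for a constant $C$ depending only on $b_s$. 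Since $t\mapsto t+\sqrt{1-c^2}\,(1+t)$ tends to $\sqrt{1-c^2}<1$ as $t\to 0^+$, there is $\kappa>0$ such that $\|\cB_s^{-1}\cB\|_{\mathfrak{L}(V,V)}\le\kappa$ makes this factor $\le\theta$ for some $\theta\in(0,1)$; then $\|u_n-u\|_{b_s}\le\theta^n\|u_0-u\|_{b_s}\to 0$, so $u_n\to u$ in $V$. The main obstacle is the compactness step producing $c$: in infinite dimension $\mbox{\rm dist}_{b_s}(\delta,\Sigma)/\|\delta\|_{b_s}$ may approach $1$, and the argument collapses; everything else is bookkeeping with the affine map $F$ and the Pythagorean identity.
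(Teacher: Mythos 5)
Your proof is correct, and its engine is the same as the paper's: you work in the geometry induced by $b_s$, identify the greedy increment $r_n\otimes s_n$ as a best rank-one $b_s$-approximation of the preconditioned residual $\delta_n^*=\cB_s^{-1}(\cL-\cA u_{n-1})$ (the paper's $\cUt_{n-1}$), use Lemma~\ref{lem:maxminform} together with the orthogonality (\ref{eq:normeq}) for the auxiliary symmetric problem, and use finite dimensionality only to compare the injective norm with the $b_s$-norm — your constant $c$ is exactly the paper's $1/\alpha$, which the paper obtains by simply invoking equivalence of norms in finite dimension, whereas your compactness argument (with (A1) ensuring definiteness) spells it out. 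Where you genuinely diverge is the final assembly: the paper tracks the residual, expands $\|\cUt_{n-1}\|_{\Vt}^2-\|\cUt_n\|_{\Vt}^2$ to show it dominates $(1-\kappa^2-2\alpha\kappa)\,\|\cUt_{n-1}\|_{\it}^2$, concludes $\|\cUt_n\|_{\it}\to 0$ by a telescoping series argument, and then recovers $u_n\to u$ by inverting $\cI+\cBt$ (which is why it also needs $\kappa<1$); you instead track the error directly through the recursion $e_n=-Te_{n-1}-\rho_n$ with the Pythagorean bound $\|\rho_n\|_{b_s}\le\sqrt{1-c^2}\,\|\delta_n^*\|_{b_s}$, obtaining an explicit geometric contraction of $\|u_n-u\|_{b_s}$ without the final inversion step. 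Your route yields a slightly stronger, more quantitative conclusion (a linear rate), while the paper's residual-based estimate matches its later numerical discussion of monotone decay of the residual norm; both arguments share the same dimension-dependent constant, so neither removes that limitation, as you rightly observe. (The only cosmetic point is that the best rank-one approximation need not be unique, so $\Pi$ is not a map; this is harmless since both facts you use — the orthogonality and the identity $\|r_n\otimes s_n\|_{b_s}=\|\delta_n^*\|_{\it}$ — hold for any minimizer selected by the algorithm.)
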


\begin{proof}
Since $V_t$ and $V_x$ are assumed to be finite dimensional, from assumption (A1), so is $V$. Let $\kappa:=\|\cB_s^{-1} \cB\|_{\mathfrak{L}(V,V)}$.
Let us denote by $\langle \cdot, \cdot \rangle_{\Vt} = b_s(\cdot, \cdot)$ the scalar product on $V$ induced by the symmetric bilinear form 
$b_s$, and by $\|\cdot\|_{\Vt}$ the associated norm. 

Let $\cLt\in V$ and $\cBt \in \mathfrak{L}(V,V)$ be defined by
\begin{eqnarray*}
\forall v\in V, & \;  l(v) = \langle \cLt, v \rangle_{\Vt},\\
\forall v,w\in V, & \; b(v,w) = \langle \cBt v, w\rangle_{\Vt}.\\
\end{eqnarray*}
Actually, $\cBt = \cB_s^{-1}\cB$ and $\cLt = \cB_s^{-1} \cL$. This implies that for all $v\in V$, $\|\cAt v \|_{\Vt} \leq \kappa \|v\|_{\Vt}$.
Besides, $\cI + \cBt = \cB_s^{-1} (\cB_s + \cB) = \cB_s^{-1} \cA$, where $\cI$ denotes the identity operator on $V$.

\medskip

The proof relies on the fact that all norms are equivalent in finite dimension. In particular, let us define the injective norm $\|\cdot\|_{\it}$ by
$$
\forall v\in V, \; \|v\|_{\it} = \mathop{\sup}_{(r,s)\in V_x\times V_t} \frac{\langle v, r\otimes s\rangle_{\Vt}}{\|r\otimes s\|_{\Vt}}.
$$
Then, there exists $\alpha>0$ such that 
$$
\forall v\in V, \; \|v\|_{\Vt} \leq \alpha \|v\|_{\it}.
$$
Besides, for all $v\in V$, $ \|v\|_{\it} \leq \|v\|_{\Vt}$. 

\medskip

Let us introduce $\cUt_n:= \cLt-\cBt u_n - u_n = \cB_s^{-1}(\cL - \cA u_n)$. For all $n\in\N^*$, $\cUt_n$ is the vector of $V$ such that
$$
\forall v\in V, \; \langle \cUt_n, v \rangle_{\Vt} = l(v) - a(u_n, v).
$$
For $n\geq 1$, the Euler equations associated to (\ref{eq:explicitalg}) read:
$$
b_s(u_{n-1} + r_n\otimes s_n, r_n\otimes \delta s + \delta r \otimes s_n) = l(r_n\otimes \delta s + \delta r \otimes s_n) -b(u_{n-1}, r_n\otimes \delta s + \delta r \otimes s_n), \; \forall (\delta r, \delta s)\in V_x\times V_t.
$$
As a consequence, $l(r_n\otimes s_n) -a(u_{n-1}, r_n\otimes s_n) - b_s(r_n\otimes s_n) = \langle \cUt_{n-1} -r_n\otimes s_n, r_n\otimes s_n\rangle_{\Vt}  = 0$, which implies that
$\|\cUt_{n-1}\|_{\Vt}^2 = \|\cUt_{n-1}- r_n\otimes s_n\|_{\Vt}^2 + \|r_n\otimes s_n\|_{\Vt}^2$. 
Furthermore, from Lemma~\ref{lem:maxminform}, we have
$$
\|r_n\otimes s_n\|_{\Vt}= \mathop{\sup}_{(r,s)\in V_x \times V_t, \; r\otimes s\neq 0} \frac{\langle \cUt_{n-1}, r\otimes s\rangle_{\Vt}}{\|r\otimes s\|_{\Vt}} = \|\cUt_{n-1}\|_{\it}.
$$

\medskip

It holds that
\begin{eqnarray*}
 \|\cUt_{n-1}\|_{\Vt}^2 - \|\cUt_n\|_{\Vt}^2 & = & \|\cUt_{n-1}\|_{\Vt}^2 - \|\cUt_{n-1} - r_n\otimes s_n - \cBt r_n\otimes s_n\|_{\Vt}^2\\
& = & \|\cUt_{n-1}\|_{\Vt}^2 - \|\cUt_{n-1} -r_n\otimes s_n\|_{\Vt}^2 - \|\cBt r_n\otimes s_n\|_{\Vt}^2 \\
&& + 2\langle \cBt r_n\otimes s_n, \cUt_{n-1}-r_n\otimes s_n\rangle_{\Vt}\\
& = & \|r_n\otimes s_n\|_{\Vt}^2 - \|\cBt r_n\otimes s_n\|_{\Vt}^2 -2\langle \cBt r_n\otimes s_n, \cUt_{n-1}-r_n\otimes s_n\rangle_{\Vt}\\
&\geq & (1-\kappa^2) \|r_n\otimes s_n\|_{\Vt}^2 - 2 \kappa \|r_n\otimes s_n\|_{\Vt}\|\cUt_{n-1} -r_n\otimes s_n\|_{\Vt}\\
&\geq &(1-\kappa^2) \|r_n\otimes s_n\|_{\Vt}^2 - 2 \kappa \|r_n\otimes s_n\|_{\Vt}\|\cUt_{n-1}\|_{\Vt}\\
&\geq &  (1-\kappa^2) \|r_n\otimes s_n\|_{\Vt}^2 - 2 \kappa\alpha \|r_n\otimes s_n\|_{\Vt}\|\cUt_{n-1}\|_{\it}\\
&=&  (1-\kappa^2 - 2\alpha \kappa ) \|r_n\otimes s_n\|_{\Vt}^2\\
&=&  (1-\kappa^2 - 2\alpha \kappa ) \|\cUt_{n-1}\|_{\it}^2.\\
\end{eqnarray*}
If $\kappa$ is small enough to ensure that $1-\kappa^2 - 2\alpha \kappa > 0$, the sequence $(\|\cUt_n\|_{\Vt})_{n\in\N^*}$ is non-increasing, hence convergent. 
Thus, the series of general terms $(\|\cUt_{n-1}\|_{\Vt}^2 - \|\cUt_n\|_{\Vt}^2)_{n\in\N^*}$ and $(\|\cUt_n\|_{\it}^2)_{n\in\N^*}$ are convergent. This yields
$$
\|\cUt_n\|_{\it} \mathop{\longrightarrow}_{n\to\infty} 0,
$$
and, as all norms are equivalent in finite dimension,
$$
\|\cUt_n\|_{\Vt} \mathop{\longrightarrow}_{n\to\infty} 0.
$$
If $\kappa<1$, the operator $(\cI + \cBt)^{-1}$ 
is continuous from $V$ to $V$ for the norm $\|\cdot\|_{\Vt}$, and 
$$
\|u-u_n\|_{\Vt} = \|\cA^{-1}\cL -u_n\|_{\Vt} = \left\| \left( \cB_s^{-1} \cA\right)^{-1} \cB_s^{-1}\cL -u_n\right\|_{\Vt} = \|(\cI+\cBt)^{-1}(\cLt - (\cI+\cBt)u_n)\|_{\Vt}  = \|(\cI+\cBt)^{-1}\cUt_n\|_{\Vt} \mathop{\longrightarrow}_{n\to\infty} 0.
$$
Since the norm $\|\cdot\|_{\Vt}$ is equivalent to the norm $\|\cdot\|_V$, we obtain the desired result. 
\end{proof}

Unfortunately, the rate $\kappa$ in (\ref{eq:rate}) strongly depends on the dimensions of $V_x$ and $V_t$, as shown in the proof of Proposition~\ref{prop:explicit}. However, in some
 simple numerical experiments we performed with this algorithm, the rate $\kappa$ does not seem to depend on the dimension, as illustrated in Section~\ref{sec:num}. The analysis 
of this numerical observation is work in progress.

\medskip

A possible way to obtain a decomposition such as (\ref{eq:decomposition})  satisfying (\ref{eq:rate}) is to consider preconditioners for the initial antisymmetric 
problem. Problem (\ref{eq:nonsym}) can be rewritten as
\begin{equation}\label{eq:statit}
 \left\{
\begin{array}{l}
\mbox{find }u\in V\mbox{ such that}\\
u = (\cI - \cC \cA)u + \cC \cL,\\
\end{array}
\right .
\end{equation}
where $\cC$ is a well-chosen continuous linear operator on $V$ such that $\|\cI - \cC\cA\|_{\mathfrak{L}(V,V)}$ is as small as possible. With this formulation, 
one can consider the following algorithm: 
\begin{enumerate}
 \item let $u_0 = 0$ and $n=1$;
\item find $(r_n, s_n)\in V_x \times V_t$ such that
\begin{equation}
(r_n,s_n) \in \mathop{\mbox{\rm argmin}}_{(r,s)\in V_x \times V_t} \frac{1}{2}\left\langle u_{n-1} + r\otimes s, u_{n-1} + r\otimes s\right\rangle_V  - \langle (\cI - \cC\cA)u_{n-1} + \cC \cL, r\otimes s\rangle_V; 
\end{equation}
\item set $u_n = u_{n-1} + r_n\otimes s_n$ and $n = n+1$.
\end{enumerate}
The bilinear form associated with the continuous operator $\cC\cA$ needs to be easy to compute on tensor product functions. At least in 
the case when $V$ is finite dimensional, if $\|\cI -\cC\cA\|_{\mathfrak{L}(V,V)}$ is small enough, the sequence $(u_n)_{n\in\N^*}$ converges strongly in $V$ towards the solution 
$u$ of (\ref{eq:nonsym}). However, finding a suitable operator $\cC$ is not an easy task in general.

\section{Numerical results}\label{sec:num}

\subsection{Presentation of the toy problems}\label{sec:toy}

In this section, we present the two toy problems we consider in these numerical tests.  Let $b_x, b_t\in \R$, $m_x = m_t = 1$ and $\cX = \cT = (-1,1)$.

\medskip

The first toy problem is inspired from Example~\ref{ex:continuous}. In the continuous setting, we define 
\begin{itemize}
 \item $V := L^2_{\rm per}((-1,1), H^1_{\rm per}((-1,1), \C)) = L^2_{\rm per}((-1,1), \C)\otimes H^1_{\rm per}((-1,1),\C)$;
\item $V_x := H^1_{\rm per}((-1,1), \C)$;
\item $V_t := L^2_{\rm per}((-1,1), \C)$;  
\item $f\in L^2_{\rm per}((-1,1)^2, \C)$;
\item for all $u,v\in V$, 
$$
a(u,v) := \int_{-1}^1 \int_{-1}^1 \left( \nabla_x u \cdot \overline{\nabla_x v} + (b_x\cdot \nabla_x u)\overline{v}  + u \overline{v} \right),
$$ 
and 
$$
l(v):= \int_{-1}^1\int_{-1}^1 f\overline{v}.
$$
\end{itemize}

\medskip

The second toy problem is the following
\begin{itemize}
 \item $V := H^1_{\rm per}((-1,1)^2, \C))$;
\item $V_x := H^1_{\rm per}((-1,1), \C)$;
\item $V_t := H^1_{\rm per}((-1,1), \C)$;  
\item $f\in L^2_{\rm per}((-1,1)^2, \C)$;
\item for all $u,v\in V$, 
\begin{eqnarray*}
a(u,v) & := & \int_{-1}^1 \int_{-1}^1 \left( \nabla_x u \cdot \overline{\nabla_x v} + \nabla_t u\cdot \overline{\nabla_t v}\right) \\
&& + \int_{-1}^1 \int_{-1}^1\left( (b_x\cdot\nabla_x u +  b_t\cdot \nabla_t u)\overline{v}  + u\overline{v} \right),\\
\end{eqnarray*}
and
$$
l(v):=\int_{-1}^1\int_{-1}^1 f\overline{v}.
$$
\end{itemize}

It can be easily checked in the two cases that the Hilbert spaces $V$, $V_x$ and $V_t$ satisfy assumptions (A1) and (A2). Besides, the sesquilinear form $a: V\times V \to \C$ is continuous and satisfies 
assumption (A3). Our aim is to approximate the function $u\in V$ such that 
\begin{equation}\label{eq:testpbc}
\forall v\in V, \; a(u,v) = l(v).
\end{equation}

We introduce finite-dimensional discretization spaces defined as follows. For all $k\in \Z$, let $e_k^x: \cX \ni x \mapsto \frac{1}{\sqrt{2}}e^{ikx}$ and 
$e_k^t: \cT\ni t \mapsto \frac{1}{\sqrt{2}}e^{ikt}$. For $N_x, N_t \in \N^*$, we define
$$
V_x^{N_x}:=\mbox{\rm Span}\left\{ e_k^x, \; -N_x \leq k \leq N_x \right\},
$$
$$
V_t^{N_t}:=\mbox{\rm Span}\left\{ e_k^t, \; -N_t \leq k \leq N_t \right\},
$$
and $V^{N_x, N_t}:= V_x^{N_x} \otimes V_t^{N_t}$. 

We then consider the Galerkin approximation of problem (\ref{eq:testpbc}) in the discretization space $V^{N_x, N_t}$, i.e. find $u\in V^{N_x, N_t}$ such that
\begin{equation}\label{eq:pbd}
\forall v \in V^{N_x,N_t}, \; a( u, v) = l(v). 
\end{equation}
For $u$ solution of (\ref{eq:pbd}), we denote by $U = (u_{kl})_{|k|\leq N_x, \; |l|\leq N_t} \in \C^{(2N_x+1)\times (2N_t+1)}$ the matrix such that
$$
u = \sum_{k=-N_x}^{N_x} \sum_{l=-N_t}^{N_t} u_{kl} e_k^x \otimes e_l^t,
$$
and by $F = (f_{kl})_{|k|\leq N_x, \; |l|\leq N_t} \in \C^{(2N_x+1)\times (2N_t+1)}$ the matrix such that
$$
f = \sum_{k=-N_x}^{N_x} \sum_{l=-N_t}^{N_t} f_{kl} e_k^x \otimes e_l^t.
$$
In this discrete setting, the first problem is equivalent to: find $U\in \C^{(2N_x+1)\times (2N_t+1)}$ such that 
$$
D_xU + b_x N_x U + U = F
$$
and the second problem equivalent to: find $U\in \C^{(2N_x+1)\times (2N_t+1)}$ such that 
\begin{equation}\label{eq:pbdisc2}
D_x U + b_x N_x U + U D_t +  b_t UN_t + U = F
\end{equation}
where $D_x, N_x \in \C^{(2N_x+1) \times (2N_x+1)}$, $D_t, N_t \in \C^{(2N_t +1) \times (2N_t+1)}$, and for all $-N_x \leq k, k' \leq N_x$ and $-N_t \leq l,l' \leq N_t$, 
\begin{eqnarray*}
(D_x)_{kk'} & = & |k|^2 \delta_{kk'}, \\
(D_t)_{ll'} & = & |l|^2 \delta_{ll'},\\
(N_x)_{kk'} & = & i k \delta_{kk'}, \\
(N_t)_{ll'} & = & il \delta_{ll'}.\\
\end{eqnarray*}

\subsection{Tests with the Decomposition algorithm}

Let us begin with the numerical tests performed to illustrate the convergence of the Decomposition algorithm presented in Section~\ref{sec:us}.

\subsubsection{The fixed-point loop}\label{sec:fpcrit}

Before presenting the numerical results obtained with this algorithm, we would like to discuss the fixed-point procedure used in practice in order to compute the pair of functions
$(r_n,s_n)\in V^{N_t}_t\times V^{N_x}_x$, solution of (\ref{eq:explicitalg}) at each iteration $n\in\N^*$ of the Decomposition algorithm. 

The algorithm reads as follows:
\begin{itemize}
 \item choose $\left( r_n^{(0)}, s_n^{(0)} \right) \in V^{N_t}_t \times V^{N_x}_x$ and set $m=1$;
\item find $\left( r_n^{(m)}, s_n^{(m)} \right) \in V^{N_t}_t \times V^{N_x}_x$ such that
$$
\left\{
\begin{array}{l}
 b_s\left(r_n^{(m)} \otimes s_n^{(m-1)}, \delta r \otimes s_n^{(m-1)}\right) = l\left(\delta r \otimes s_n^{(m-1)}\right) - a\left( u_{n-1}, \delta r \otimes s_n^{(m-1)}\right), \; \forall \delta r\in V^{N_x}_x,\\
b_s\left(r_n^{(m)} \otimes s_n^{(m)}, r_n^{(m)} \otimes \delta s\right) = l\left(r_n^{(m)} \otimes \delta s\right) - a\left( u_{n-1}, r_n^{(m)} \otimes \delta s \right), \; \forall \delta s\in V^{N_t}_t;\\
\end{array}
\right .
$$
\item set $m=m+1$.
\end{itemize}

This fixed-point procedure exhibits of the exponential convergence rate which is numerically observed for standard greedy algorithms for symmetric coercive problems, as discussed in 
Section~\ref{sec:sym}. The stopping criterion we choose for simulations presented in Section~\ref{sec:dectest} is the following: $\|r_n^{(m)}\otimes s_n^{(m)} - r_n^{(m-1)}\otimes s_n^{(m-1)}\| < \varepsilon$ where $\|\cdot\|$ 
denotes the Frobenius norm and $\varepsilon = 10^{-8}$.  

\subsubsection{Numerical results}\label{sec:dectest}

We consider the second problem presented in Section~\ref{sec:toy}, where $f\in L^2_{\rm per}((-1,1)^2, \C)$ is chosen such that
$$
f = \sum_{k\in \Z} \sum_{l \in \Z} f_{kl} e_k^x \otimes e_l^t,
$$
with $f_{kl} = \frac{1}{|k|^2 + |l|^2 +1}$ for all $k,l\in \Z$. 

We decompose the bilinear form $a(\cdot, \cdot)$ as 
$a(\cdot, \cdot) = b_s(\cdot, \cdot) + b(\cdot, \cdot)$ where $b_s(\cdot, \cdot) = a_s(\cdot, \cdot)$ is the symmetric part of $a(\cdot, \cdot)$ defined in (\ref{eq:sympartdef}) and 
$b(\cdot, \cdot) = a_{as}(\cdot, \cdot)$ is the antisymmetric part of $a(\cdot, \cdot)$ defined in (\ref{eq:antisympartdef}).

\medskip

Let us recall that Proposition~\ref{prop:explicit} states that, in the finite-dimensional case, there exists a rate $\kappa$ (which depends on the dimension of the Hilbert spaces) small 
enough such that if $\|\cB_s^{-1} \cB\|_{\mathfrak{L}(V,V)} < \kappa$, the Decomposition algorithm is ensured to converge. In the numerical simulations presented below, we have witnessed that there 
exists a threshold rate $\kappa$ such that 
\begin{itemize}
 \item if $\|\cB_s^{-1} \cB\|_{\mathfrak{L}(V,V)} < \kappa$, the algorithm converges; 
\item if $\|\cB_s^{-1} \cB\|_{\mathfrak{L}(V,V)} = \kappa$, the algorithm does not converge, but the norm of the residual remains bounded; 
\item if $\|\cB_s^{-1} \cB\|_{\mathfrak{L}(V,V)} > \kappa$, the algorithm does not converge, and the norm of the residual blows up. 
\end{itemize}
Besides, the rate $\kappa$ seems to be independent of the dimension of the Hilbert spaces. 

\medskip

For $n\in\N^*$, let $U_n\in \C^{(2N_x+1)\times (2N_t+1)}$ denote the approximation of $U$ solution of (\ref{eq:pbdisc2}) given by the algorithm at the $n^{th}$ iteration. 
The following three figures show the evolution of the logarithm of the norm of the residual $\|F - D_x U_n - U_n D_t - b_x N_x U_n - b_t U_n N_t - U \|_{\mathfrak{S}_2}$ 
(where $\|\cdot\|_{\mathfrak{S}_2}$ denotes 
the Frobenius norm), for different values of 
$N = N_x = N_t$ and $b= b_x = b_t$ as a function of $n$. We observe numerically that in this case, the limiting 
rate is obtained for $b = 1.5$ for any value of $N$. 

\begin{figure}
 \begin{center}
 \includegraphics[width = 12cm]{./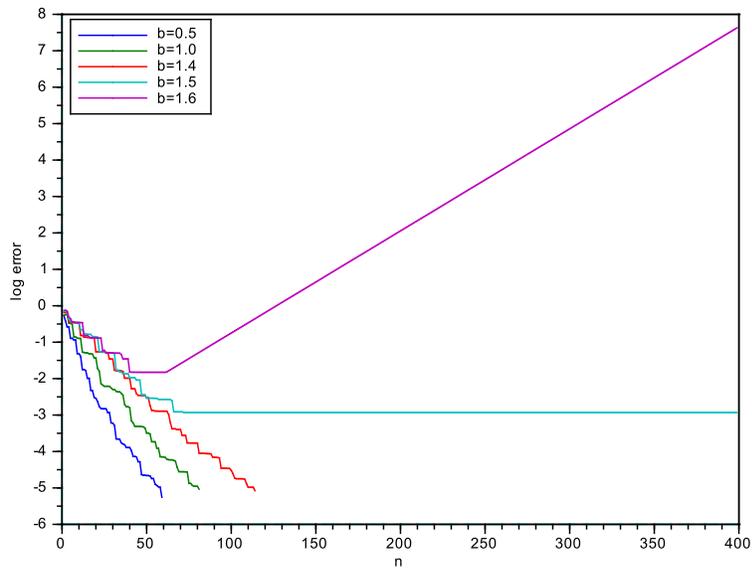}
\caption{Evolution of $\log_{10}\left(\|F - D_x U_n - U_n D_t - b_x N_x U_n - b_t U_n N_t - U \|_{\mathfrak{S}_2}\right)$ as a function of $n$ for $N=20$ and different values of $b$.}
\end{center}
\end{figure}

\begin{figure}
\begin{center}
 \includegraphics[width = 12cm]{./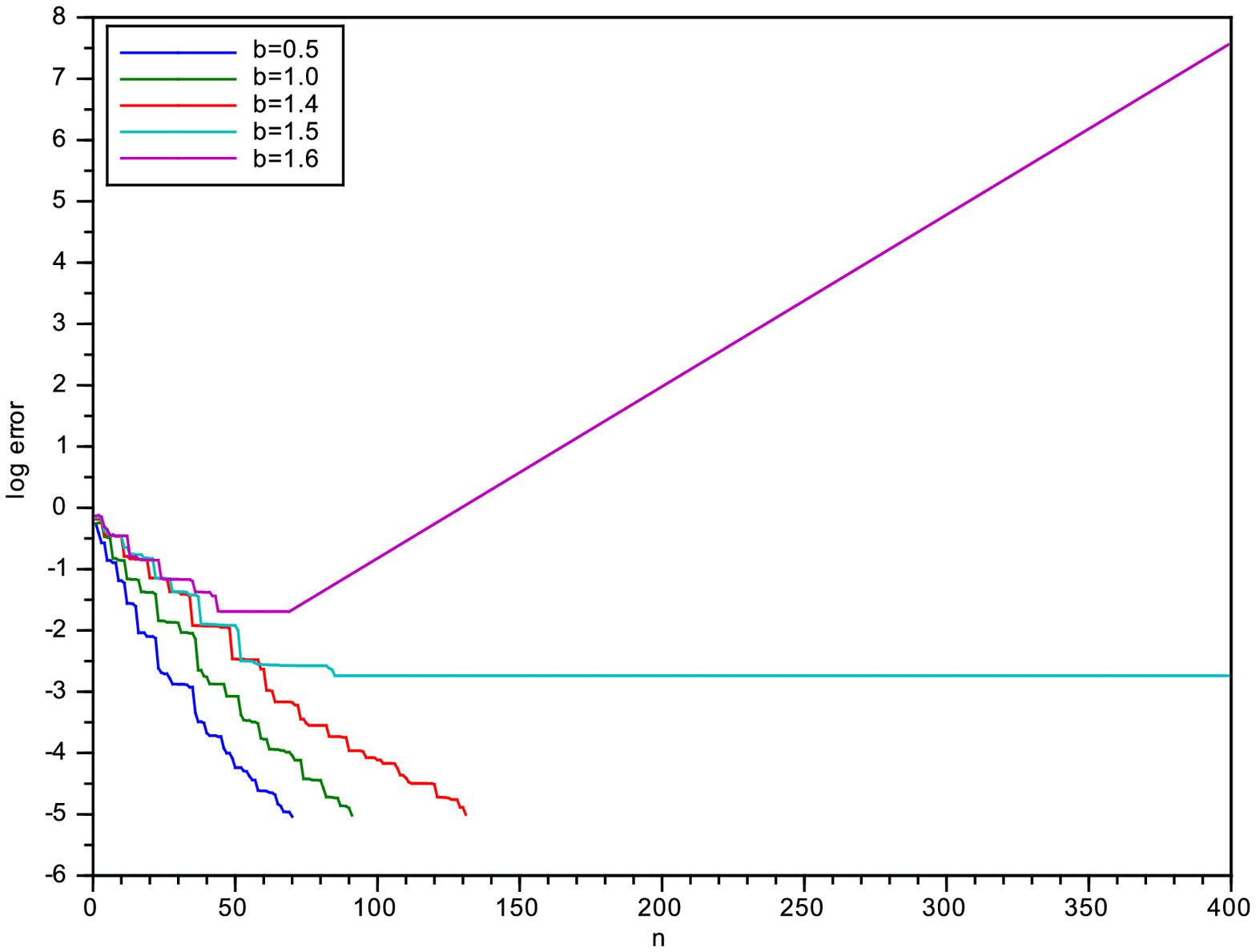}

\caption{Evolution of $\log_{10}\left(\|F - D_x U_n - U_n D_t - b_x N_x U_n - b_t U_n N_t - U \|_{\mathfrak{S}_2}\right)$ as a function of $n$ for $N=50$ and different values of $b$.}
\end{center}
\end{figure}

\begin{figure}
\begin{center}
 \includegraphics[width = 12cm]{./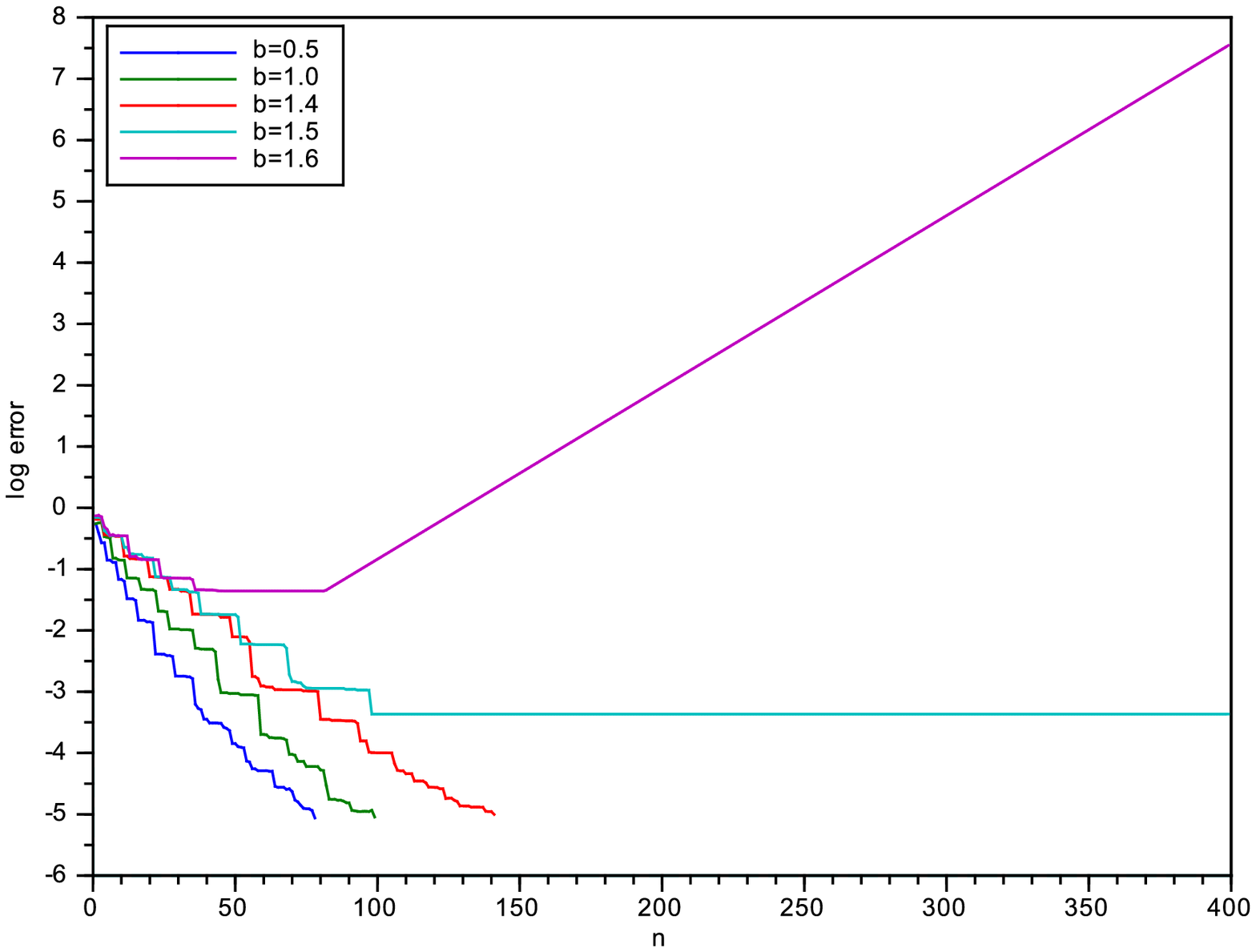}

\caption{Evolution of $\log_{10}\left(\|F - D_x U_n - U_n D_t - b_x N_x U_n - b_t U_n N_t - U \|_{\mathfrak{S}_2}\right)$ as a function of $n$ for $N=100$ and different values of $b$.}
\end{center}
\end{figure}

\medskip

We also performed another set of tests with a modified version of the Decomposition algorithm, in order to \itshape increase the threshold rate $\kappa$\normalfont, in a sense which will 
be precised below. The modified Decomposition algorithm reads as follows for $\alpha \in [1, +\infty)$:   
\begin{enumerate}
 \item let $u_0 = 0$ and $n=1$;
\item find $(r_n, s_n)\in V_x \times V_t$ such that
\begin{equation}\label{eq:explicitalg2}
(r_n,s_n) \in \mathop{\mbox{\rm argmin}}_{(r,s)\in V_x \times V_t} \frac{\alpha}{2} b_s( r\otimes s, r\otimes s) - l(r\otimes s) -a(u_{n-1}, r\otimes s);  
\end{equation}
\item set $u_n = u_{n-1} + r_n \otimes s_n$ and set $n=n+1$.
\end{enumerate}
Let us point out that this algorithms is equivalent to the standard Decomposition algorithm presented in Section~\ref{sec:us} in the case when $\alpha = 1$. 

\medskip

Equivalently, for each $n\in\N^*$, $r_n\otimes s_n$ is a tensor product solution to the first iteration of the greedy algorithm applied to 
the symmetric coercive problem
$$
\left\{
\begin{array}{l}
 \mbox{find }u\in V\mbox{ such that}\\
\forall v\in V, \; \alpha b_s(u,v) = l(v) - a(u_{n-1}, v).\\
\end{array}
\right .
$$

We observe that this algorithm has the same convergence properties as the standard Decomposition algorithm, i.e. there exists a threshold rate 
$\kappa_\alpha = \|\cB_s^{-1}\cB\|_{\cL(V)}$ such that
\begin{itemize}
 \item if $\|\cB_s^{-1} \cB\|_{\mathfrak{L}(V,V)} < \kappa_\alpha$, the algorithm converges; 
\item if $\|\cB_s^{-1} \cB\|_{\mathfrak{L}(V,V)} = \kappa_\alpha$, the algorithm does not converge, but the norm of the residual remains bounded; 
\item if $\|\cB_s^{-1} \cB\|_{\mathfrak{L}(V,V)} > \kappa_\alpha$, the algorithm does not converge, and the norm of the residual blows up. 
\end{itemize}
The rate $\kappa_\alpha$ also seems not to depend on the dimension of the Hilbert space. Besides, $\alpha \in [1,+\infty) \mapsto \kappa_\alpha$ seems to 
be an increasing function. Thus, choosing a larger value of $\alpha$ seems to lead to an algorithm which is convergent for larger values of $b$. 
However, the larger $\alpha$, the smaller the rate of convergence of the algorithm for a given value of $N$ and $b$. 

\medskip

The figure below presents the evolution of the logarithm of the norm of the residual 
$\|F - D_x U_n - U_n D_t - b_x N_x U_n - b_t U_n N_t - U \|_{\mathfrak{S}_2}$ for the second problem for $\alpha = 2$, $N=50$ and different values of $b$. The 
threshold value of $b$ seems to be in this case $b=2.6$.  

\begin{figure}
\begin{center}
 \includegraphics[width = 12cm]{./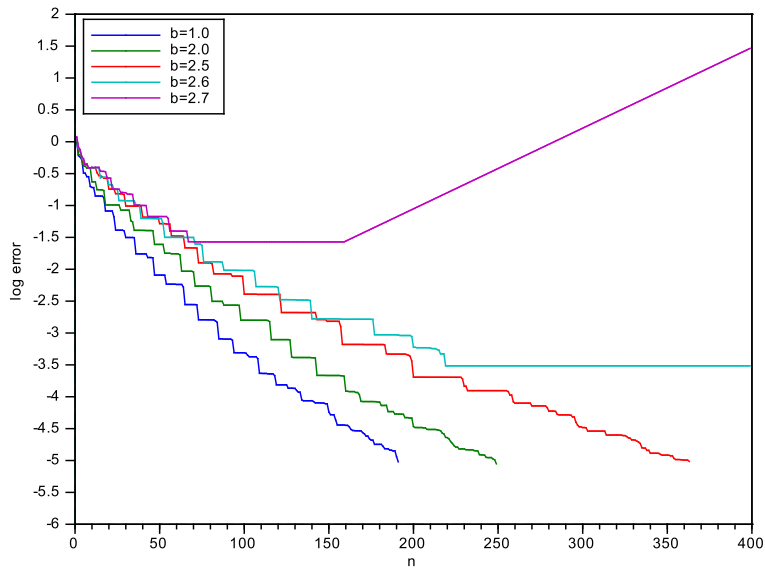}

\caption{Evolution of $\log_{10}\left(\|F - D_x U_n - U_n D_t - b_x N_x U_n - b_t U_n N_t - U \|_{\mathfrak{S}_2}\right)$ as a function of $n$ for the modified Decomposition algorithm with 
$\alpha = 2$, $N=50$ and different values of $b$.}
\end{center}
\end{figure}

\subsection{Comparison of Galerkin, Minimax, Dual Greedy and Decomposition algorithms}

In this section, we present various numerical tests performed to compare the performances of the Galerkin, Minimax, Dual Greedy and Decomposition algorithms. 

\subsubsection{Fixed point procedures}\label{sec:fpalgos}

Before presenting the simulations done to compare the performance of the four algorithms, we will detail more precisely the fixed-point procedures used for the Galerkin, Dual and MiniMax 
algorithms.

\medskip

Let us first present the fixed-point procedure used in the Galerkin algorithm. For all $n\in\N^*$, the method used to compute a pair $(r_n,s_n)\in V_x\times V_t$ solution to
 (\ref{eq:algnaive}) is the following: 
\begin{itemize}
 \item choose $\left( r_n^{(0)}, s_n^{(0)}\right) \in V_x\times V_t$ and set $m=1$;
\item find $\left( r_n^{(m)}, s_n^{(m)}\right) \in V_x\times V_t$ such that
$$
\left\{
\begin{array}{l}
 a\left(r_n^{(m)} \otimes s_n^{(m-1)}, \delta r \otimes s_n^{(m-1)}\right) = l\left(\delta r \otimes s_n^{(m-1)}\right) - a\left( u_{n-1}, \delta r \otimes s_n^{(m-1)}\right), \; \forall \delta r\in V^{N_x}_x,\\
a\left(r_n^{(m)} \otimes s_n^{(m)}, r_n^{(m)} \otimes \delta s\right) = l\left(r_n^{(m)} \otimes \delta s\right) - a\left( u_{n-1}, r_n^{(m)} \otimes \delta s \right), \; \forall \delta s\in V^{N_t}_t;\\
\end{array}
\right .
$$
\item set $m=m+1$.
\end{itemize}
All the iterations of the fixed-point procedure are well-defined. However, we have seen from Example~\ref{ex:continuous} that there are cases where any solution $(r_n\otimes s_n) \in V_x\times V_t$ of
$$
\forall (\delta r, \delta s)\in V_x\times V_t, \; a(r_n\otimes s_n, r_n\otimes \delta s + \delta r \otimes s_n) = l(r_n\otimes \delta s + \delta r \otimes s_n) -a(u_{n-1}, r_n\otimes \delta s + \delta r \otimes s_n),
$$
satisfies $r_n\otimes s_n = 0$. From Example~\ref{ex:continuous}, this is the case in particular for the first problem presented in Section~\ref{sec:toy} with $f\in L^2_{\rm per}((-1,1)^2, \C)$ 
being chosen such that for almost all $x,t\in (-1,1)$, $f(x,t) = \phi(x-t)$ with $\phi\in L^2_{\rm per}((-1,1), \C)$ a real-valued odd function. The function $f = \sum_{k,l\in\Z} f_{kl} e_k^x\otimes e_l^t$ with 
$f_{kl} = \delta_{1,k} \delta_{-1,l} + \delta_{-1,k} \delta_{1,l}$ for all $k,l\in\Z$ is an example of such a function. 
On this particular example, we observe numerically that, for $n=1$, the fixed-point procedure presented above does not converge. 

The practical implementation of the Galerkin algorithm requires the use of another stopping criterion as the one used in Section~\ref{sec:fpcrit}. In the numerical 
simulations presented in Section~\ref{sec:numrescomp}, and in the fixed-point procedures 
used in all the other algorithms implemented, the stopping criterion will be $m < m_{max}$ with $m_{max} = 20$.   
   
\medskip

The fixed-point algorithm used for the Decomposition algorithm has been detailed in Section~\ref{sec:fpcrit}, and the one used for the MiniMax 
algorithm has been described in Section~\ref{sec:minimax}. The fixed-point procedure for the Dual Greedy algorithm reads as follows:
\begin{itemize}
 \item choose $\left(\rt_n^{(0)}, \st_n^{(0)}\right) \in V_x \times V_t$ and set $\mt = 1$;
\item while $\mt < m_{max}$, find $\left(\rt_n^{(m)}, \st_n^{(m)}\right) \in V_x\times V_t$ such that for all $(\delta \rt, \delta \st)\in V_x\times V_t$, 
$$
\left\{
\begin{array}{l}
 \langle \rt_n^{(m)} \otimes \st_n^{(m-1)},  \delta \rt \otimes \st_n^{(m-1)}\rangle_V = l(\delta \rt \otimes \st_n^{(m-1)}) - a(u_{n-1} , \delta \rt \otimes \st_n^{(m-1)}),\\
\langle \rt_n^{(m)} \otimes \st_n^{(m)},  \rt_n^{(m)} \otimes \delta \st\rangle_V = l(\rt_n^{(m)} \otimes \delta \st) - a(u_{n-1} , \rt_n^{(m)} \otimes \delta \st);\\
\end{array}
\right.
$$
\item set $\mt = \mt+1$; 
\item if $\mt \geq m_{max}$, set $\rt_n = \rt_n^{(\mt)}$ and $\st_n = \st_n^{(\mt)}$; 
\item choose $\left(r_n^{(0)}, s_n^{(0)}\right) \in V_x \times V_t$ and set $m = 1$;
\item while $m < m_{max}$, find $\left(r_n^{(m)}, s_n^{(m)}\right) \in V_x\times V_t$ such that for all $(\delta \rt, \delta \st)\in V_x\times V_t$, 
$$
\left\{
\begin{array}{l}
a(r_n^{(m)}\otimes s_n^{(m-1)}, \delta \rt \otimes \st_n) = l(\delta \rt \otimes \st_n) - a(u_{n-1} , \delta \rt \otimes \st_n),\\
a(r_n^{(m)}\otimes s_n^{(m)}, \rt_n \otimes \delta \st) = l(\rt_n \otimes \delta \st) - a(u_{n-1} , \rt_n \otimes \delta \st);\\
\end{array}
\right.
$$
\item set $m = m +1$; 
\item if $m\geq m_{max}$, set $r_n = r_n^{(m)}$ and $s_n = s_n^{(m)}$. 
\end{itemize}

\subsubsection{Numerical results}\label{sec:numrescomp}

We present here some numerical results obtained for the second problem introduced in Section~\ref{sec:toy}, with $f$ chosen as in Section~\ref{sec:dectest}. Here $N = N_t = N_x = 50$ 
and the different figures show the rates of convergence of the different algorithms for several values of $b = b_x = b_t$. The different values of $b$ are the following: $0.01$, 
$0.1$, $1$ and $2$. When $b=2$, the Decomposition algorithm does not converge. 

\begin{figure}
\begin{center}
 \includegraphics[width = 12cm]{./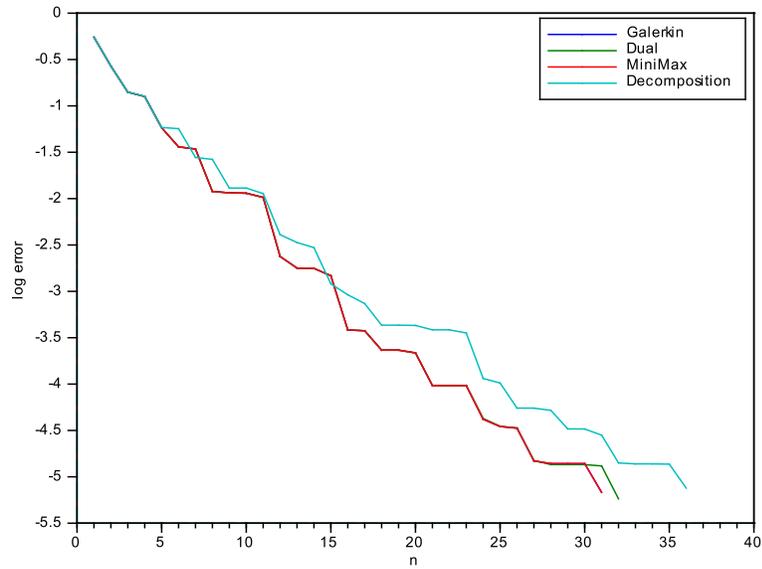}

\caption{Evolution of $\log_{10}\left(\|F - D_x U_n - b_x N_x U_n - U \|_{\mathfrak{S}_2}\right)$ for the different algorithms as a function of $n$ 
with $N=50$ and $b= 0.01$.}
\end{center}
\end{figure}

\begin{figure}
\begin{center}
 \includegraphics[width = 12cm]{./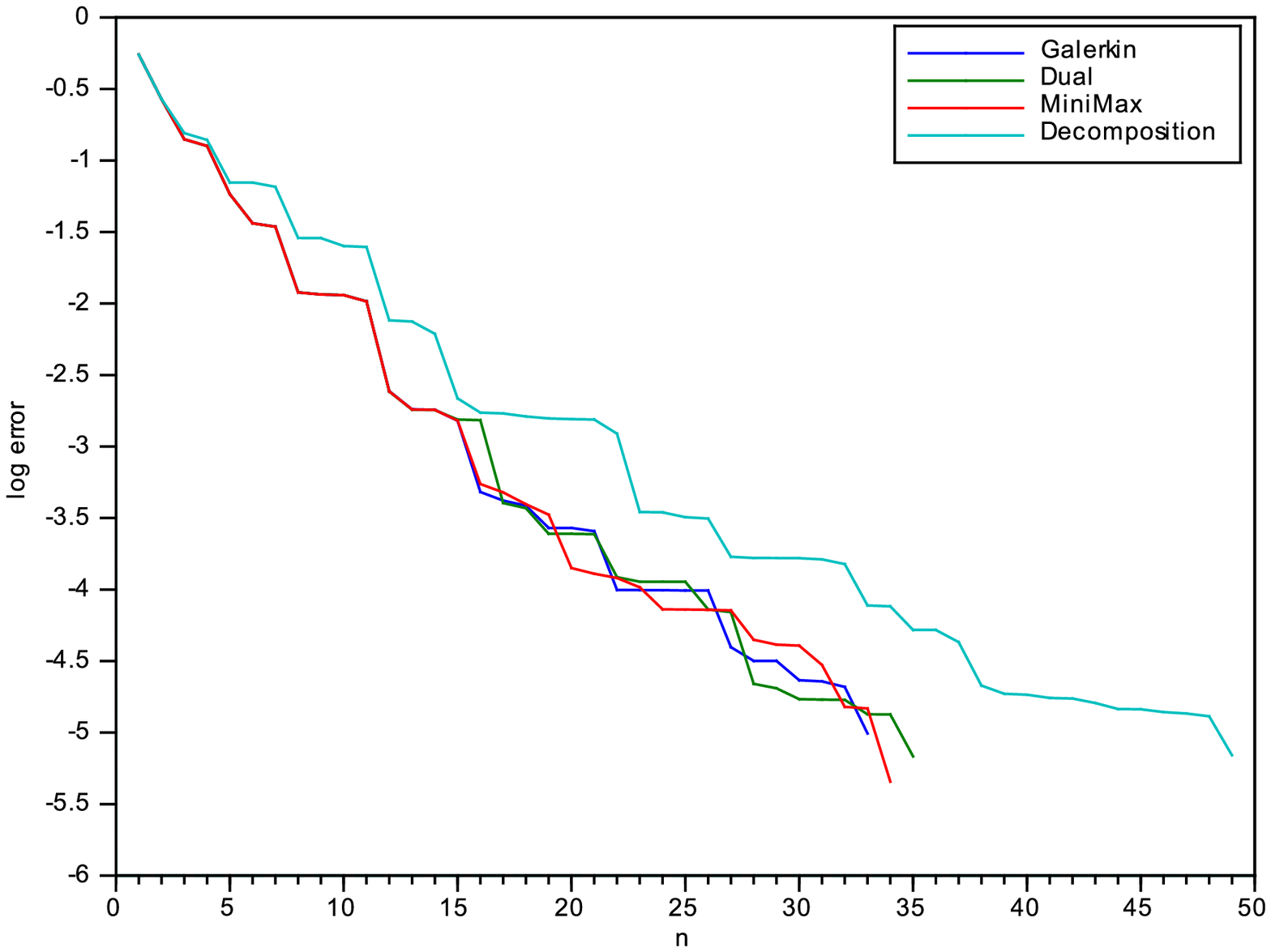}

\caption{Evolution of $\log_{10}\left(\|F - D_x U_n - b_x N_x U_n - U \|_{\mathfrak{S}_2}\right)$ for the different algorithms as a function of $n$ 
with $N=50$ and $b= 0.1$.}
\end{center}
\end{figure}

\begin{figure}
\begin{center}
 \includegraphics[width = 12cm]{./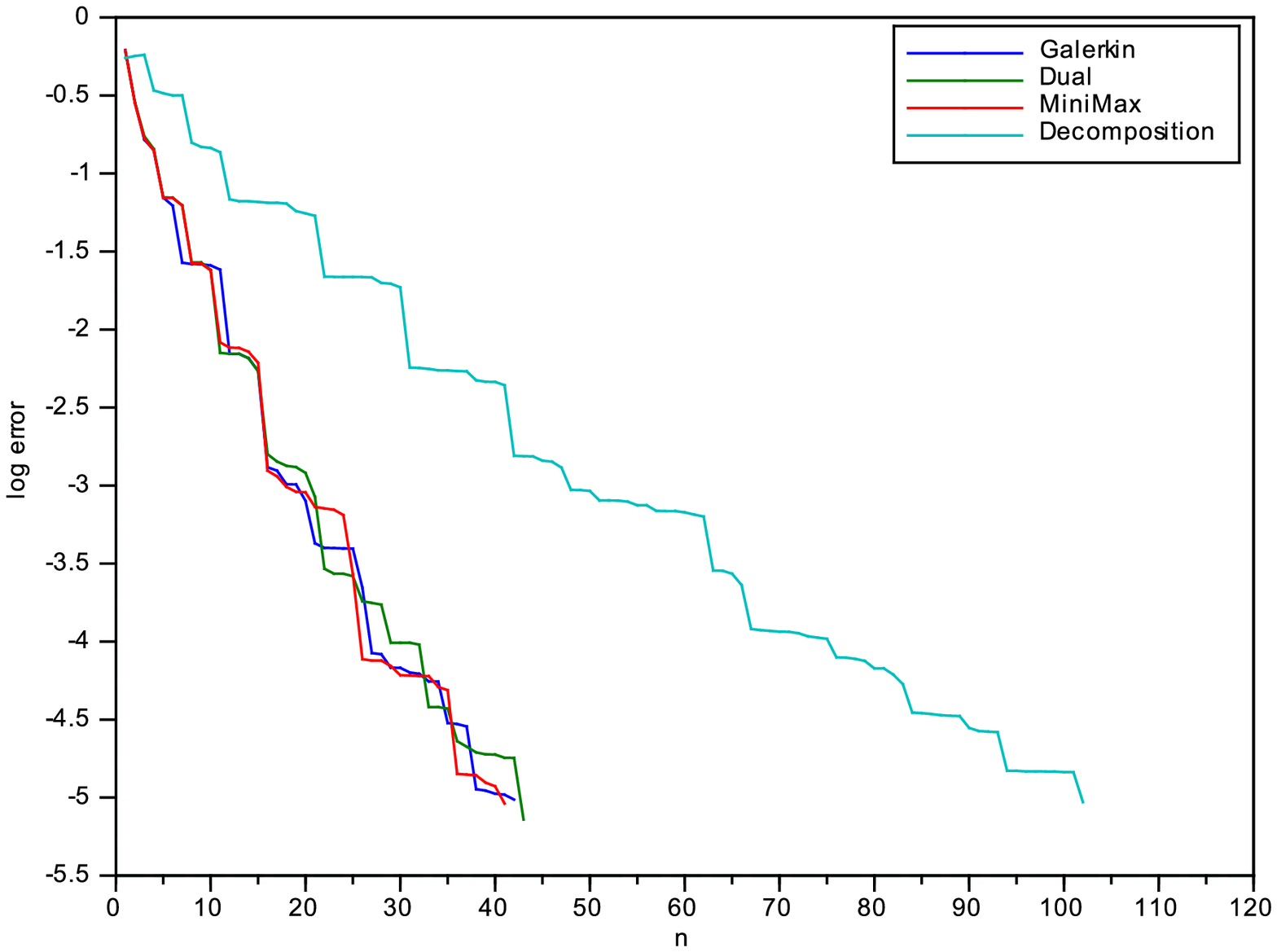}

\caption{Evolution of $\log_{10}\left(\|F - D_x U_n - b_x N_x U_n - U \|_{\mathfrak{S}_2}\right)$ for the different algorithms as a function of $n$ 
with $N=50$ and $b= 1$.}
\end{center}
\end{figure}

\begin{figure}
\begin{center}
 \includegraphics[width = 12cm]{./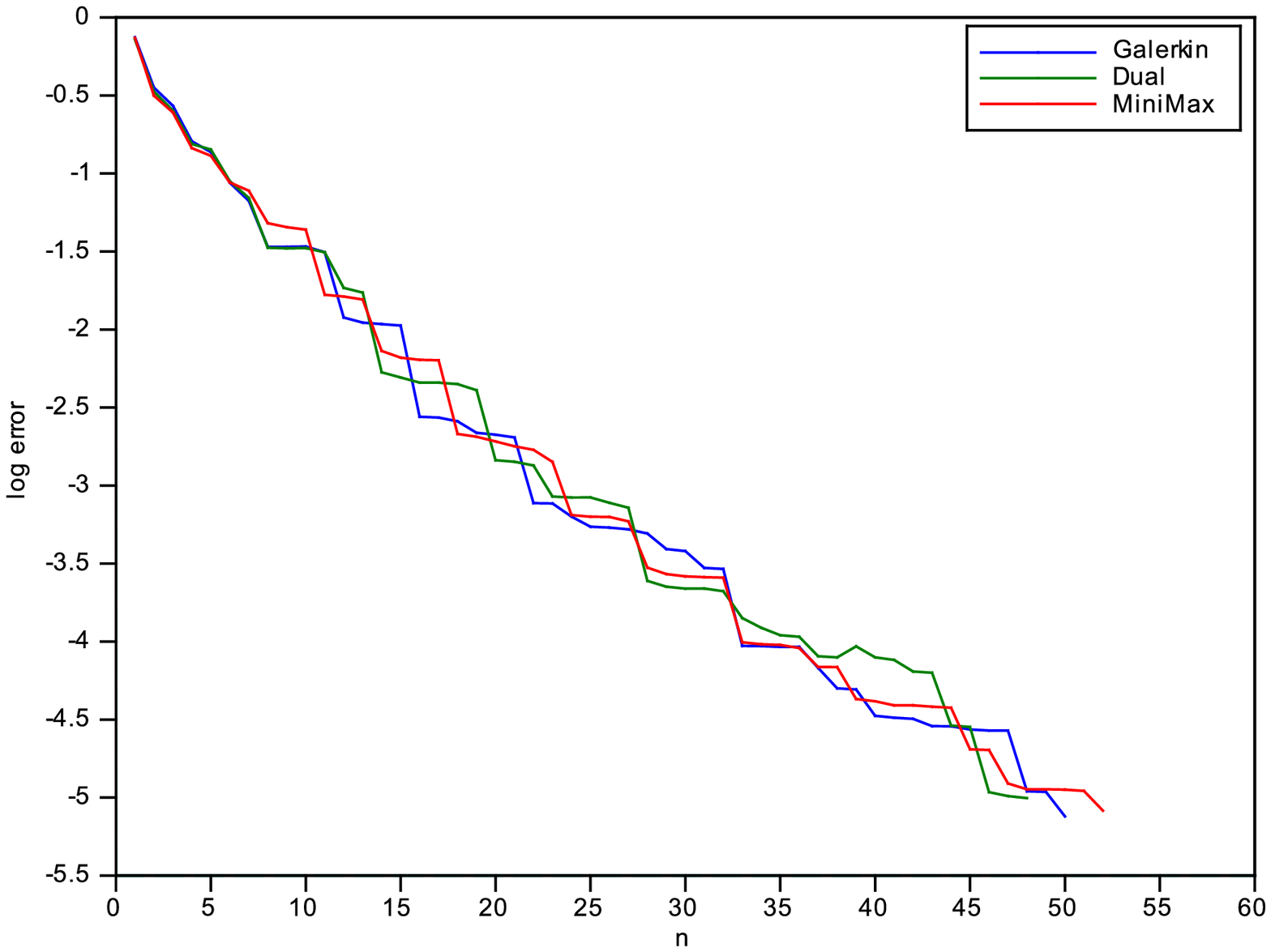}

\caption{Evolution of $\log_{10}\left(\|F - D_x U_n - b_x N_x U_n - U \|_{\mathfrak{S}_2}\right)$ for the different algorithms as a function of $n$ 
with $N=50$ and $b = 2$.}
\end{center}
\end{figure}

We observe numerically that even if the Galerkin algorithm is not well-defined through the use of the equations (\ref{eq:algnaive}), using a fixed-point procedure as described above 
and stopping the procedure after a number $m_{max}$ of iterations is enough to observe good convergence properties of the algorithm. 
Besides, the Dual Greedy and MiniMax algorithm are almost as efficient as the Galerkin algorithm. 

\medskip

The Decomposition algorithm is very efficient when the antisymmetric part of the bilinear form $a(\cdot, \cdot)$ is small, but performs badly when $b$ becomes larger. 
Let us point out 
that the CPU time needed to compute one tensor product in the MiniMax or the Dual Greedy algorithms is twice the time needed in the Decomposition or the Galerkin algorithms. 
Thus, the Decomposition algorithm is more efficient than the Dual or the MiniMax algorithm for small antisymmetric parts of the bilinear form $a(\cdot, \cdot)$ but, when 
$b$ becomes too large, the algorithm behaves poorly.

\section{Appendix: other algorithms}\label{sec:app}

In this section, we present two other possible tracks towards the design of efficient greedy algorithms for high-dimensional non-symmetric problems, for 
which there is still work in progress.  

\subsection{An ill-defined (but converging) algorithm}

In this section, we first present an algorithm whose iterations are ill-defined in general but for which we can prove the convergence in the full general case. 
Of course, this algorithm will not be useful in practice but we believe the proof is instructive in our context.

Let $\alpha>0$. The algorithm reads as follows: 
\begin{enumerate}
 \item set $u_0 = 0$ and $n=1$; 
\item find $(r_n, \rt_n, s_n, \st_n)\in V_x^2 \times V_t^2$ such that
\begin{equation}\label{eq:illdefalgo}
\left\{
\begin{array}{l}
\dps (\rt_n, \st_n) \in \mathop{\rm argmin}_{(\rt, \st)\in V_x \times V_t} \frac{1}{2}\|\rt\otimes \st\|_V^2 - l(\rt\otimes \st) - a(u_{n-1} + r_{n}\otimes s_{n}, \rt\otimes \st),\\
\dps (r_n,s_n) \in \mathop{\rm argmin}_{(r,s)\in V_x \times V_t} \frac{\alpha}{2}\|r\otimes s\|_V^2 - a(r\otimes s, \rt_n \otimes \st_n);\\
\end{array}
\right .
\end{equation}
\item set $u_n = u_{n-1} + r_n\otimes s_n$ and $n=n+1$. 
\end{enumerate}

The Euler equations associated to these coupled minimization problems read: for all $(\delta r, \delta \rt, \delta s, \delta \st) \in V_x^2 \times V_t^2$, 
\begin{equation}\label{eq:ELas}
\left\{
\begin{array}{l}
 \langle \rt_n \otimes \st_n, \rt_n \otimes \delta \st + \delta \rt \otimes \st_n \rangle_V = l( \rt_n \otimes \delta \st + \delta \rt \otimes \st_n) - a(u_{n-1} + r_n\otimes s_n,  \rt_n \otimes \delta \st + \delta \rt \otimes \st_n),\\
\alpha \langle r_n\otimes s_n, r_n \otimes \delta s + \delta r \otimes s_n\rangle_V = a(r_n \otimes \delta s + \delta r \otimes s_n, \rt_n \otimes \st_n).\\
\end{array}
\right .
\end{equation}

Using Lemma~\ref{lem:maxminform} and (\ref{eq:touse}), these definitions imply that for all $n\in\N^*$, 
\begin{equation}\label{eq:optrt}
\|\rt_n\otimes\st_n\|_V = \mathop{\sup}_{(\rt, \st)\in V_x \times V_t} \frac{ l(\rt \otimes \st) - a(u_{n-1} + r_n\otimes s_n, \rt\otimes \st)}{\|\rt \otimes \st\|_V}
\end{equation}
and
\begin{equation}\label{eq:optr}
\alpha \|r_n\otimes s_n\|_V = \mathop{\sup}_{(r,s)\in V_x\times V_t} \frac{a(r\otimes s, \rt_n\otimes \st_n)}{\|r\otimes s\|_V}.
\end{equation}
Indeed, for the first equality, one has to consider the symmetric coercive continuous bilinear form $a^1(\cdot, \cdot) =\langle \cdot, \cdot \rangle_V$ and the continuous 
linear form $l^1(\cdot)= l(\cdot) - a(u_{n-1} + r_n\otimes s_n,\cdot)$. For the second equality, the symmetric coercive continuous bilinear form to consider is 
$a^2(\cdot, \cdot) = \alpha \langle \cdot, \cdot \rangle_V$ and the continuous linear form is $l^2(\cdot) = a(\cdot, \rt_n\otimes \st_n)$.

The following result holds:
\begin{prop}\label{prop:illdef}
 Let us assume that the iterations of algorithm (\ref{eq:illdefalgo}) are well-defined for all $n\in\N^*$. Then, $(u_n)_{n\in\N^*}$ converges 
to $u$ in the sense of the injective norm: 
$$
\|u -u_n\|_{i\cA} = \mathop{\sup}_{(\rt,\st)\in V_x \times V_t} \frac{a(u-u_n, \rt\otimes \st)}{\|\rt\otimes \st\|_V} \mathop{\longrightarrow}_{n \to \infty} 0. 
$$
\end{prop}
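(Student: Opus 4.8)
The plan is to follow the scalar $\rho_n:=\|\rt_n\otimes\st_n\|_V$: first identify it with $\|u-u_n\|_{i\cA}$, then show it is non-increasing, and finally prove its limit vanishes by a weak-compactness argument based on the dual characterization (\ref{eq:optr}). First I would extract from the Euler equations (\ref{eq:ELas}) two optimality identities: testing the first equation with $\delta\rt=\rt_n,\ \delta\st=\st_n$ (and $\delta r=\delta s=0$) and the second with $\delta r=r_n,\ \delta s=s_n$ gives, after halving and using $l(\cdot)=a(u,\cdot)$ and $u_n=u_{n-1}+r_n\otimes s_n$,
\[
\rho_n^2=a(u-u_n,\rt_n\otimes\st_n),\qquad \alpha\sigma_n^2=a(r_n\otimes s_n,\rt_n\otimes\st_n),
\]
where $\sigma_n:=\|r_n\otimes s_n\|_V$. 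Rewriting (\ref{eq:optrt}) the same way gives $\rho_n=\|u-u_n\|_{i\cA}$, and likewise $\rho_{n-1}=\|u-u_{n-1}\|_{i\cA}$ (with the convention $\rho_0:=\|u\|_{i\cA}<\infty$).

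Next, writing $u-u_{n-1}=(u-u_n)+r_n\otimes s_n$ and adding the two identities yields
\[
\rho_n^2+\alpha\sigma_n^2=a(u-u_{n-1},\rt_n\otimes\st_n)\le\|u-u_{n-1}\|_{i\cA}\,\|\rt_n\otimes\st_n\|_V=\rho_{n-1}\rho_n,
\]
the inequality being legitimate because $\rt_n\otimes\st_n\in\Sigma$. From this I read off that $(\rho_n)$ is non-increasing, hence converges to some $\rho_\infty\ge0$, and that $\alpha\sigma_n^2\le\rho_0(\rho_{n-1}-\rho_n)$, which telescopes to $\sum_n\sigma_n^2<\infty$; in particular $\sigma_n\to0$.

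The crux is to prove $\rho_\infty=0$, which I would do by contradiction, assuming $\rho_\infty>0$. Equation (\ref{eq:optr}) reads $\alpha\sigma_n=\sup_{r\otimes s\neq0}\frac{a(r\otimes s,\rt_n\otimes\st_n)}{\|r\otimes s\|_V}=\|\cA^*(\rt_n\otimes\st_n)\|_i\to0$, which in particular gives the per-term bound $|a(r_j\otimes s_j,\rt_n\otimes\st_n)|\le\alpha\sigma_n\sigma_j$ for every $j$. The elementary fact that $\sum_n\sigma_n^2<\infty\Rightarrow\liminf_n\bigl(\sigma_n\sum_{j<n}\sigma_j\bigr)=0$ (if $\sum\sigma_j<\infty$ this is clear since $\sigma_n\to0$; otherwise $S_n:=\sum_{j<n}\sigma_j\to\infty$ and $\sigma_nS_n\ge c>0$ eventually would force $\sum_n\sigma_n/S_n\le(2/c)\sum_n\sigma_n^2<\infty$, contradicting $\sum_n\sigma_n/S_n\ge\sum_n(\ln S_{n+1}-\ln S_n)=\infty$) lets me pick a subsequence $(n_k)$ with $\sigma_{n_k}\sum_{j<n_k}\sigma_j\to0$; passing to a further subsequence, $\rt_{n_k}\otimes\st_{n_k}\rightharpoonup v_*$ weakly in $V$ (the sequence is bounded by $\rho_0$). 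Then $v_*\in\Sigma$ by (A2); applying weak lower semicontinuity of $\|\cdot\|_i$ (a supremum of weakly continuous functionals) to $\cA^*(\rt_{n_k}\otimes\st_{n_k})\rightharpoonup\cA^*v_*$ gives $\|\cA^*v_*\|_i\le\liminf_k\alpha\sigma_{n_k}=0$; since (A1) makes $\|\cdot\|_i$ a genuine norm, $\cA^*v_*=0$, and invertibility of $\cA$ (from (A3)) gives $v_*=0$. Finally I expand
\[
\rho_{n_k}^2=a(u-u_{n_k},\rt_{n_k}\otimes\st_{n_k})=l(\rt_{n_k}\otimes\st_{n_k})-a(u_{n_k-1},\rt_{n_k}\otimes\st_{n_k})-\alpha\sigma_{n_k}^2,
\]
and each term on the right tends to $0$: the first because $l$ is continuous, hence weakly continuous, and $\rt_{n_k}\otimes\st_{n_k}\rightharpoonup v_*=0$; the middle one because, writing $u_{n_k-1}=\sum_{j<n_k}r_j\otimes s_j$, it is bounded by $\alpha\sigma_{n_k}\sum_{j<n_k}\sigma_j\to0$ by the choice of $(n_k)$; the last because $\sigma_{n_k}\to0$. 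Hence $\rho_{n_k}\to0$, contradicting $\rho_{n_k}\ge\rho_\infty>0$; therefore $\rho_\infty=0$, i.e. $\|u-u_n\|_{i\cA}=\rho_n\to0$, which is the claim.

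I expect the only genuinely delicate point to be the control of $a(u_{n-1},\rt_n\otimes\st_n)$ in the last step: there is no a priori bound on $\|u_n\|_V$ because the increments $r_n\otimes s_n$ are only square-summable, not summable, so one cannot simply pass to a limit of $u_{n_k}$. The remedy is precisely the combination of the $\liminf$ lemma on $(\sigma_n)$ with the dual identity (\ref{eq:optr}), which supplies the summable-in-$j$ estimate $|a(r_j\otimes s_j,\rt_n\otimes\st_n)|\le\alpha\sigma_n\sigma_j$; everything else is routine bookkeeping with the Euler equations together with standard weak compactness in the Hilbert space $V$.
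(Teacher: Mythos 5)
Your proof is correct and follows essentially the same route as the paper's: you track $\|\rt_n\otimes\st_n\|_V=\|u-u_n\|_{i\cA}$, obtain its monotonicity and the square-summability of $\|r_n\otimes s_n\|_V$ from the optimality relations (\ref{eq:optrt})--(\ref{eq:optr}), control $a(u_{n-1},\rt_n\otimes\st_n)$ through the per-term dual bound along a suitable subsequence, and use weak convergence of $\rt_n\otimes\st_n$ to kill the $l$-term. Your variations (unconditional summability via $\alpha\sigma_n^2\le\rho_0(\rho_{n-1}-\rho_n)$, the $\liminf_n\sigma_n S_n=0$ lemma in place of the paper's Cauchy--Schwarz/$n\sigma_n^2$ subsequence trick, and identifying the weak limit via injectivity of $\cA^*$ rather than surjectivity of $\cA$) are only minor technical differences.
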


\begin{proof}
 Let us first remark that, using (\ref{eq:optrt}), 
$$
\|u-u_n\|_{i\cA} = \mathop{\sup}_{(\rt,\st)\in V_x \times V_t} \frac{a(u-u_n, \rt\otimes \st)}{\|\rt\otimes \st\|_V} = \|\rt_n\otimes \st_n\|_V.
$$
Thus, it is sufficient to prove that the sequence $(\|\rt_n \otimes \st_n \|_V)_{n\in\N^*}$ converges to $0$ as $n$ goes to infinity. Let us first prove that this sequence is non-increasing. 
Let $n\geq 2$. From the Euler equation (\ref{eq:ELas}) associated to the minimization problem defining $(\rt_n, \st_n)$, it holds that
\begin{equation}\label{eq:ELdeclined}
\|\rt_n \otimes \st_n\|_V^2 = l(\rt_n \otimes \st_n) - a(u_{n-1} + r_n\otimes s_n, \rt_n\otimes \st_n).
\end{equation}
Besides, using (\ref{eq:optrt}) at iteration $n-1$, we obtain
\begin{eqnarray*}
\|\rt_{n-1} \otimes \st_{n-1}\|_V & = & \mathop{\sup}_{(\rt, \st)\in V_x \times V_t, \; \rt\otimes \st\neq 0} 
\frac{l(\rt\otimes \st) -a(u_{n-1}, \rt\otimes \st)}{\|\rt\otimes \st\|_V}\\
& \geq & \frac{l(\rt_n\otimes \st_n) - a(u_{n-1}, \rt_n\otimes \st_n)}{\|\rt_n\otimes \st_n\|_V},\\
\end{eqnarray*}
which, using (\ref{eq:ELdeclined}), leads to
$$
\|\rt_n \otimes \st_n\|_V^2 \leq \|\rt_{n-1} \otimes \st_{n-1}\|_V \|\rt_n \otimes \st_n\|_V -a( r_n\otimes s_n, \rt_n\otimes \st_n).
 $$
Using the second Euler equation (\ref{eq:ELas}) defining $(r_n,s_n)$, we have that $\alpha \|r_n\otimes s_n\|_V^2  = a(r_n\otimes s_n, \rt_n\otimes \st_n)$. Finally, 
it holds that 
\begin{equation}\label{eq:neweq}
\|\rt_n \otimes \st_n\|_V^2 \leq \|\rt_n \otimes \st_n\|_V \|\rt_{n-1} \otimes \st_{n-1}\|_V - \alpha \|r_n \otimes s_n\|_V^2. 
\end{equation}
This implies that the sequence $(\|\rt_n \otimes \st_n\|_V)_{n\in\N^*}$ is non-increasing and thus converges towards a limit $z\geq 0$. Let us argue by contradiction and assume 
that $z>0$. Dividing by $\|\rt_n\otimes \st_n\|_V$ equation (\ref{eq:neweq}), we obtain
$$
\alpha\frac{ \|r_n\otimes s_n\|_V^2}{\|\rt_n\otimes \st_n\|_V} \leq \|\rt_{n-1} \otimes \st_{n-1} \|_V - \|\rt_n \otimes \st_n\|_V. 
$$
Since we have assumed that $\|\rt_n\otimes \st_n\|_V \geq z>0$ for all $n\in\N^*$, the series of general term $(\|r_n\otimes s_n\|_V^2)_{n\in\N^*}$ converges and 
$\dps \|r_n\otimes s_n\|_V \mathop{\longrightarrow}_{n\to \infty} 0$. Using (\ref{eq:optr}), this implies that for all $(r,s)\in V_x \times V_t$, 
$$
a(r\otimes s, \rt_n\otimes \st_n) \mathop{\longrightarrow}_{n\to \infty} 0.
$$
Using assumption (A1) and the fact that $(\|\rt_n \otimes \st_n\|_V)_{n\in\N^*}$ is bounded, we have
$$
\forall w\in V, \; a(w, \rt_n \otimes \st_n) \mathop{\longrightarrow}_{n\to \infty} 0.
$$
Since we have assumed that the operator $\cA$ is bijective, it is surjective on $V$, and the sequence $(\rt_n\otimes \st_n)_{n\in\N^*}$ weakly converges to $0$ in $V$. 

Using (\ref{eq:optrt}), it holds that for all $n\in\N^*$, 
$$
\|\rt_n \otimes \st_n\|_V^2 = l(\rt_n \otimes \st_n) - a(u_n, \rt_n \otimes \st_n) = l(\rt_n \otimes \st_n) - a\left( \sum_{k=1}^n r_k \otimes s_k, \rt_n \otimes \st_n\right).
$$
Since $(\rt_n\otimes\st_n)_{n\in\N^*}$ weakly converges to $0$ in $V$, necessarily $\dps l(\rt_n \otimes \st_n) \mathop{\longrightarrow}_{n\to\infty} 0$. 
Besides, using (\ref{eq:optr}), we have
\begin{eqnarray*}
\left| a\left( \sum_{k=1}^n r_k\otimes s_k, \rt_n\otimes \st_n \right)\right| & \leq & \sum_{k=1}^n \left| a\left( r_k\otimes s_k, \rt_n \otimes \st_n \right)\right|\\
& \leq & \alpha \sum_{k=1}^n \|r_k\otimes s_k\|_V \|r_n \otimes s_n\|_V\\
& \leq & \alpha \left( \sum_{k=1}^n \|r_k\otimes s_k\|_V^2 \right)^{1/2} (n \|r_n\otimes s_n\|_V^2)^{1/2}.\\
\end{eqnarray*}
Since the series of general term $(\|r_n\otimes s_n\|_V^2)_{n\in\N^*}$ is convergent, the sequence $\left(  \sum_{k=1}^n \|r_k\otimes s_k\|_V^2 \right)_{n\in\N^*}$ is bounded and 
there exists a subsequence of $(n\|r_n\otimes s_n\|_V^2)_{n\in\N^*}$ which converges to $0$. Thus, there exists a subsequence of $(\|\rt_n \otimes \st_n\|_V)_{n\in\N^*}$ converging to 
$0$ and since the whole sequence converges to $z$, we have $z=0$ by uniqueness of the limit. We obtain a contradiction.
\end{proof}

\begin{remark}
Unfortunately, as announced in the beginning of this section, in general, the iterations of algorithm (\ref{eq:illdefalgo}) are not well-defined in the sense that there may not exist 
a solution $(r_n, \rt_n, s_n, \st_n) \in V_x^2 \times V_t^2$ of the coupled minimization problems. 
Numerically, we can observe that if we use a coupled fixed-point algorithm similar to the one presented for the Minimax algorithm, 
the procedure does not converge in general. Finding a suitable way to adapt 
these ideas in an implementable well-defined algorithm is work in progress. 
\end{remark}

\normalfont

\begin{remark}
When the bilinear form $a(\cdot, \cdot)$ is symmetric and coercive, all the iterations of algorithm (\ref{eq:illdefalgo}) are well-defined. If $\langle \cdot , \cdot \rangle_V$ is chosen 
to be equal to $a(\cdot, \cdot)$, the second equation of (\ref{eq:ELas}) implies that $r_n\otimes s_n = \frac{1}{\alpha}\rt_n\otimes \st_n$ and the first equation of (\ref{eq:ELas}) 
can be rewritten as: for all $(\delta \rt, \delta \st)\in V_x \times V_t$,
$$
\left( 1 + \frac{1}{\alpha} \right) \langle \rt_n \otimes \st_n, \rt_n \otimes \delta\st + \delta \rt \otimes \st_n \rangle_V  = l(\rt_n \otimes \delta \st + \delta \rt \otimes \st_n) - \langle u_{n-1}, \rt_n\otimes \delta \st + \delta \rt \otimes \st_n\rangle_V.
$$
This Euler equation is similar to the Euler equation of the first iteration of the standard greedy algorithm applied to the symmetric coercive problem:
$$
\left\{
\begin{array}{l}
 \mbox{find }\widetilde{u}\in V \mbox{ such that}\\
\forall v\in V,\left( 1 + \; \frac{1}{\alpha}\right) \langle \widetilde{u}, v\rangle_V = l(v) - \langle u_{n-1},v\rangle_V.\\  
\end{array}
\right .
$$
Thus, if we consider now the following (well-defined) algorithm
\begin{enumerate}
 \item set $u_0 = 0$ and $n=1$; 
\item find $(r_n,s_n)\in V_x \times V_t$ such that
$$
(r_n, s_n)\in \mathop{\mbox{argmin}}_{(r,s)\in V_x \times V_t} \frac{\lambda}{2}\|r\otimes s\|_V^2 -l(r\otimes s) - \langle u_{n-1}, v \rangle_V;
$$
\item set $u_n = u_{n-1} + r_n\otimes s_n$ and $n=n+1$,
\end{enumerate}
following the proof of Proposition~\ref{prop:illdef}, we can prove that the sequence $(u_n)_{n\in\N^*}$ converges to $\cL$ in the sense of the injective norm as soon as $\lambda >1$. 
Let us point out that when $\lambda = 1$, this algorithm is identical to the standard greedy algorithm applied to the symmetric coercive problem
$$
\left\{
\begin{array}{l}
 \mbox{find }\widetilde{u}\in V \mbox{ such that}\\
\forall v\in V, \langle \widetilde{u}, v\rangle_V = l(v).\\  
\end{array}
\right .
$$
\end{remark}

 \subsection{Link with a symmetric formulation}
 
 Let us now present another approach, for which no convergence result have been proved so far.
 The idea is based on the article~\cite{Cohennonsym} by Cohen, Dahmen and Welper, where the objective was to develop stable formulations of multiscale 
 convection-diffusion equations. 
 
 \medskip
 
 The principle of the method is to reformulate the antisymmetric problem (\ref{eq:nonsym}) defined on the Hilbert space $V$, as a symmetric problem defined on the Hilbert space 
 $V\times V$. Indeed, it is proved in \cite{Cohennonsym} that the unique solution of the problem
 \begin{equation}\label{eq:Cohen}
  \left\{
 \begin{array}{l}
  \mbox{find }(v,\vt)\in V\times V \mbox{ such that}\\
 a(w,\vt) = 0, \quad \forall w\in V,\\
 a(v,\wt) - \langle R_V \vt,\wt\rangle = l(\wt), \quad \forall \wt\in V, \\
\end{array}
 \right .
 \end{equation}
 is $(v,\vt) = (u,0)$ where $u$ is the unique solution of (\ref{eq:nonsym}). 
 
 \medskip

 This new problem is now \itshape symmetric\normalfont. It is equivalent to the following problem
 $$
 \left\{
 \begin{array}{l}
 \mbox{find }(v,\vt)\in V\times V\mbox{ such that}\\
 \left( \begin{array}{cc}
 0 & A^*\\
 A & -R_V \\
 \end{array}
 \right)
 \left( 
 \begin{array}{c}
  v\\
 \vt\\
 \end{array}
 \right)
  = 
 \left(
 \begin{array}{c}
  0\\
 L\\
 \end{array}
 \right) 
 \mbox{ in } V' \times V'.\\
 \end{array}
 \right .
 $$

 This new formulation of the problem is symmetric, but \itshape not coercive\normalfont. No convergence results exist for greedy methods in this framework. 
 However, the situation seems more encouraging than in the original non-symmetric case. It is to be noted though that the use of a simple Galerkin algorithm, similar to the one 
 introduced in Section~\ref{sec:galerkin}, does not work in this case 
 either. More subtle algorithms need to be designed in this case as well, and this is currently work in progress.

\bibliography{biblio}

\end{document}